 \newtheorem{theorem}{Theorem}[section]
\newtheorem{corollary}[theorem]{Corollary}
\newtheorem{proposition}[theorem]{Proposition}
\begin{document}
\title{Non-Parametric Weighted Tests for Independence Based on Empirical Copula Process}
\author{Ivan Medovikov \\\vspace{6mm} \em{Department of Economics, Brock University \\ 500 Glenridge Ave., St. Catharines, L2S 3A1, Canada \\ Tel.: +1 905 688 55 50 ext. 6148, Email: imedovikov@brocku.ca}}

\begin{abstract}
We propose a class of flexible non-parametric tests for the presence of dependence between components of a random vector based on weighted Cram\'{e}r-von Mises functionals of the empirical copula process. The weights act as a tuning parameter and are shown to significantly influence the power of the test, making it more sensitive to different types of dependence. Asymptotic properties of the test are stated in the general case, for an arbitrary bounded and integrable weighting function, and computational formulas for a number of weighted statistics are provided. Several issues relating to the choice of the weights are discussed, and a simulation study is conducted to investigate the power of the test under a variety of  dependence alternatives. The greatest gain in power is found to occur when weights are set proportional to true deviations from independence copula.



\end{abstract}

\maketitle

\section{Introduction}
\label{introduction}
We propose a class of flexible non-parametric tests for detecting dependence between $d \geq 2$ scalar components of a random vector $\textbf{X} = (X_1,...,X_d)$ that are consistent against any alternative to independence and require no distributional knowledge or assumptions. While such "blanket" non-parametric tests have been available since the seminar work of \cite{Blum1961a}, the tests considered here draw upon new procedures that recently emerged in the literature on empirical copula processes and stem from the margin-free characterization of independence attainable through copula. 

To begin, let $F$ represent the joint d.f. of $\textbf{X}$ and $F_1,...,F_d$ be the marginals, which we assume to be unknown. Following the result of \cite{sklar}, the joint d.f. $F$ can be written as
\begin{equation}
F(\textbf{X}) = C[F_1(x_1),...,F_d(x_d)],\hspace{4mm} (x_1,...,x_d) \in \mathbb{R}^d,
\end{equation} where the function $C:[0,1]^d \rightarrow [0,1]$ is the so-called \textit{copula} of $\textbf{X}$. The copula has become central to the analysis of dependence as it provides a complete, and in the case of continuous marginals, a unique description of the relationship between $X_1,...,X_d$. Many, if not all non-parametric measures of dependence can be viewed as functions of $C$. For example, when $d=2$ rank correlation measures such as Kendall's $\tau$ and Spearman's $\rho$ can be written in terms of $C$ as 
\begin{equation}
\tau = 4 \int_{[0,1]^2} C(u_1,u_2)dC(u_1,u_2) -1,
\end{equation} and
\begin{equation}
\rho = 12 \int_{[0,1]^2} u_1u_2 dC(u_1,u_2) -3.
\end{equation} For details and a comprehensive introduction to copulas see \cite{Nelsen2006}. 

While non-parametric tests for independence based on linear or rank-correlation measures such as $\tau$ and $\rho$ are easy to implement, they are not consistent against any general alternative to independence and may lack power particularly when dependence between the components of $\textbf{X}$ is non-monotone. In a seminar paper \cite{Blum1961a} propose a portmanteau test for independence based on empirical distribution functions $\hat{F}_1,..,\hat{F}_d$. They key limitation of the test of \cite{Blum1961a}, however, is that the limiting distribution of the statistic depends on the specification of the marginals $F_1,..,F_d$. 

It is easy to see that the hypothesis of independence can be characterized by  \textit{independence copula} $C^\perp(\textbf{u}) = \prod_{j=1}^d u_j$, $\textbf{u} \in [0,1]^d$, which makes it natural to develop tests of independence based on empirical process
\begin{equation}
\label{empirical_copula_process}
\sqrt{n} \left [ C_n(\textbf{u}) - \prod_{j=1}^d u_j \right ], \hspace{4mm} \textbf{u} \in [0,1]^d,
\end{equation} where $C_n$ is an estimate of $C$ obtained from $n$ independent copies $(X_{1,1},...,X_{1,d})$, ...,$(X_{n,1},...,X_{n,d}))$. For a fixed $n$, the random function in (\ref{empirical_copula_process}) shows the distance between the dependence structure of $\textbf{X}$ encoded in $C$ and independence characterized by $C^\perp$. Note that since for any $F_1,..,F_d$ random variables $u_j = F_j(x_j)$ are uniform on $[0,1]$, the behavior of empirical copula process and  hence of functionals of (\ref{empirical_copula_process}) is completely independent of the marginals.

 Inspired by \cite{hoeffding} and \cite{dugue1975}, a portmanteau test for independence based on M\"{o}bius decomposition of the empirical copula process into independent sub-processes was proposed in \cite{Deheuvels1980}. Finite-sample behaviour of Cram\'{e}r-von Mises functionals of such sub-processes was studied in \cite{Genest2004} both in serial and non-serial settings, and \cite{Genest2006} obtain the power curve for the test of \cite{Genest2004} in bivariate case and compare it against the power of alternative procedures based on linear rank statistics. The asymptotic efficiency of a Cram\'{e}r-von Mises test based on (\ref{empirical_copula_process}) is investigated in \cite{Genest2007a}. More recently, \cite{Kojadinovic2009b} generalize the results of \cite{Deheuvels1980} to the case of random vectors and derive asymptotic behaviour of corresponding Cram\'{e}r-von Mises statistics for vectorial independence, with an extension of the test of \cite{Kojadinovic2009b} to vector time series provided in \cite{Kojadinovic2009d}. An important application where a test for independence is used to probe for goodness of fit of Archimedian copulas can be found in \cite{Quessy2010}.

The objective of this paper is to present a highly-flexible class of non-parametric tests for independence based on \textit{weighted} Cram\'{e}r-von Mises functionals of  (\ref{empirical_copula_process}). The weights act as a tuning parameter which, as we show, enables adjustment of power properties of the test and makes it more sensitive towards certain types of dependence. The first application of weights in the context of copula tests for independence appeared recently in \cite{Deheuvels2007}, where the use of an exponential weighting function is suggested, but all practical issues such as computation of the test statistic, its critical points, and most importantly, the effect of weights on test power are left unexplored. 

The aim of this paper is to fill this gap. First, we state the asymptotic properties of weighted statistics in the general case, allowing for weights to be defined by any arbitrary bounded and integrable function. This leads to a wide class of consistent weighted tests for independence and enables easy switching of the weights. Second, we conduct a simulation study to assess, for the first time, the impact of weights on the power of copula test for independence under a variety of alternatives and find that weights that are proportional to the deviations of $C$ from independence copula appear to lead to greatest power gains. We provide a discussion of the weight choice problem and give additional results that simplify the computation weighted statistics from sample ranks.

The rest of this paper is organized as follows. The second section introduces the generalized weighted Cram\'{e}r-von Mises independence statistic, states its asymptotic properties and discusses computation of the statistic from sample ranks. The third section focuses on issues associated with the choice of the weights, and provides closed-form expressions for several weighted statistics likely to be interesting from practical standpoint. Finally, a simulation study is conducted in the fourth section to assess the impact of weights on test power under variety of dependence alternatives. A brief discussion of the results is provided in Section \ref{section::discussion}.

\section{Weighted Cram\'{e}r-von Mises tests for independence}
\label{section::weighted_tests}

We begin by reviewing asymptotic properties of the empirical copula process, form which the limiting behavior of the statistics will later follow.

\subsection{The empirical copula process}
 The empirical copula $C_n$ is usually defined as 
\begin{equation}
C_n(\textbf{u}) = \frac{1}{n} \sum_{i=1}^n \prod_{j=1}^d \mathbb{I} [\hat{F}_{j}(X_{i,j}) \leq u_j],\hspace{4mm} \textbf{u} \in [0,1]^d,
\end{equation} where $\mathbb{I}[\cdot]$ is an indicator function and $\hat{F}_{j}(x)$ is the empirical c.d.f. of $X_j$, for $j=1..d$, given by
\begin{equation}
\hat{F}_{j}(x) = \frac{1}{n}\sum_{i=1}^n \mathbb{I}(X_{i,j} \leq x), \hspace{4mm} x \in \mathbb{R}.
\end{equation}  Letting $\hat{U}_{i,j} = \hat{F}_j(X_{i,j})$, the empirical copula can be viewed simply as the empirical c.d.f. of percentile ranks $\hat{U}_{i,j}$:
\begin{equation}
C_n(\textbf{u}) = \frac{1}{n}\sum_{i=1}^{n} \prod_{j=1}^d \mathbb{I}(\hat{U}_{i,j} \leq u_j),\hspace{4mm} \textbf{u}\in[0,1]^d.
\end{equation} The estimator $C_n(\textbf{u})$ appears to first have been studied in \cite{deheuvals1979}, and its asymptotic properties are usually established through the empirical copula process.

Let $\mathbb{D}$ represent the space of all bounded functions from $[0,1]^d \rightarrow \mathbb{R}$ equipped with uniform metric. The following theorem establishes asymptotic properties of the empirical copula process and of $C_n(\textbf{u})$. These results appear in \cite{Fermanian2004} and \cite{tsukahara} and are summarized in Theorem 1 of \cite{Kojadinovic2009b}, which we restate without proof, with the requirements on $C$ refined as in \cite{Segers2012}:
\begin{theorem}
\label{theorem::empirical_copula_process}
Suppose that for each $j \in \{1,..,d\}$, the partial derivatives $\partial C/\partial u_j$ exist and are continuous on the set $V_{j}:=\{\textbf{u} \in [0,1]^d:0 < u_j < 1\}$. Then, the empirical process
\begin{equation}
\sqrt{n} \left [ C_n(\textbf{u}) - C(\textbf{u}) \right ], \hspace{4mm}\textbf{u} \in [0,1]^d
\end{equation} converges weakly in $\mathbb{D}$ to tight centred Gaussian process 
\begin{equation}
\mathcal{G}(\textbf{u}) = \mathcal{B}(\textbf{u}) - \sum_{i=1}^d \partial_i C(\textbf{u})\mathcal{B}(1,...,1,u_i,1,...,1),\hspace{4mm} \textbf{u} \in [0,1]^d,
\end{equation} where $\partial_i C(\textbf{u})$ is the partial derivative of copula $C$ with respect to its i'th component and $\mathcal{B}$ is a multivariate tied-down Brownian bridge on $[0,1]^d$ with covariance function 
\begin{equation}
E[\mathcal{B}(\textbf{u})\mathcal{B}(\textbf{u}')] = C(\textbf{u} \wedge \textbf{u}') - C(\textbf{u})C(\textbf{u}').
\end{equation}
\end{theorem}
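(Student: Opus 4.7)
The plan is to derive the limit in two stages. First, I would work with the oracle process based on the unobservable true pseudo-observations $U_{i,j} = F_j(X_{i,j})$. Since the vector $(U_{i,1},\ldots,U_{i,d})$ has distribution function exactly $C$ on $[0,1]^d$, the classical multivariate empirical-process (Donsker) theorem gives that the oracle process $\sqrt{n}[D_n(\mathbf{u}) - C(\mathbf{u})]$, where $D_n$ is the empirical c.d.f. of the $U_i$'s, converges weakly in $\mathbb{D}$ to the tight centred Gaussian process $\mathcal{B}$ with the stated covariance. This handles the leading Brownian-bridge term in $\mathcal{G}$ and also delivers the needed asymptotic tightness.

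Second, I would relate the feasible rank-based copula $C_n$ to this oracle process. Writing $C_n(\mathbf{u}) = D_n(\hat F_1 \circ F_1^{-1}(u_1),\ldots,\hat F_d \circ F_d^{-1}(u_d))$ exhibits $C_n$ as $D_n$ composed with the marginal empirical quantile transforms, and a functional delta argument on this composition yields, uniformly in $\mathbf{u} \in [0,1]^d$,
\begin{equation*}
\sqrt{n}[C_n(\mathbf{u}) - C(\mathbf{u})] = \sqrt{n}[D_n(\mathbf{u}) - C(\mathbf{u})] - \sum_{j=1}^d \partial_j C(\mathbf{u}) \, \sqrt{n}[D_n^{(j)}(u_j) - u_j] + o_P(1),
\end{equation*}
where $D_n^{(j)}$ denotes the $j$-th marginal empirical c.d.f. of the true $U_{i,j}$'s. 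Joint weak convergence of $D_n$ with its marginal sections $D_n^{(j)}$, whose limits are precisely $\mathcal{B}(1,\ldots,1,u_j,1,\ldots,1)$, combined with the continuous mapping theorem then assembles $\mathcal{G}$ in the stated form and pins down its covariance structure.

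The main obstacle, and the precise reason the refinement of hypotheses from Segers (2012) is invoked, is that $\partial_j C$ is only assumed to exist and be continuous on $V_j$, not on the closure of $[0,1]^d$. Near the faces $\{u_j = 0\}$ and $\{u_j = 1\}$ the partial derivatives may blow up, so the displayed linearization cannot be obtained by a naive Taylor expansion. The key device is to control the product $\partial_j C(\mathbf{u}) \cdot \sqrt{n}[D_n^{(j)}(u_j) - u_j]$ uniformly inside shrinking boundary strips: for $u_j$ in such a strip, the rescaled empirical fluctuation is small with high probability, and this decay compensates for any singularity of $\partial_j C$, while on the complementary set $\partial_j C$ is bounded and ordinary stochastic-equicontinuity arguments apply. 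Patching the two regimes is where the real technical work lies; once the uniform linearization holds, weak convergence to $\mathcal{G}$ and the covariance calculation follow by standard empirical-process arguments.
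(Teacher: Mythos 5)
This theorem is not proved in the paper at all: it is explicitly restated without proof, with the result attributed to Fermanian, Radulovi\'c and Wegkamp (2004), Tsukahara, and Kojadinovic--Holmes, under the weakened smoothness condition of Segers (2012). Your outline is, in substance, exactly the argument of those references: (i) Donsker's theorem for the oracle empirical d.f.\ of the unobservable uniforms $U_{i,j}=F_j(X_{i,j})$, giving the tight limit $\mathcal{B}$ with covariance $C(\mathbf{u}\wedge\mathbf{u}')-C(\mathbf{u})C(\mathbf{u}')$; (ii) the Stute-type asymptotic representation $\sqrt{n}[C_n-C]=\sqrt{n}[D_n-C]-\sum_j \partial_j C\,\sqrt{n}[D_n^{(j)}-\mathrm{id}]+o_P(1)$ uniformly on $[0,1]^d$; (iii) joint convergence and continuous mapping to assemble $\mathcal{G}$. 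So your route coincides with the proof the paper is pointing to rather than differing from it.

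Two details in your sketch should be corrected. First, the representation of $C_n$ is written backwards: $\hat F_j\circ F_j^{-1}$ is the marginal \emph{empirical d.f.}\ of the $U_{i,j}$'s, whereas $C_n(\mathbf{u})$ equals (up to $O(1/n)$ terms) $D_n$ evaluated at the marginal empirical \emph{quantile} transforms, i.e.\ $D_n\bigl(F_1\circ\hat F_1^{-1}(u_1),\ldots,F_d\circ\hat F_d^{-1}(u_d)\bigr)$; your verbal description says "quantile transforms," so this is a slip of notation, but it matters for getting the sign of the correction terms right. Second, the difficulty near the faces is not that $\partial_j C$ "blows up": since any copula is Lipschitz with constant $1$ in each argument, the partial derivatives lie in $[0,1]$ wherever they exist. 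The genuine obstruction is that they may fail to exist, or fail to be continuous, on the boundary, so the classical Hadamard-differentiability/delta-method route (which assumes continuity on the closed cube) is unavailable for many standard families (Gumbel, Clayton, $t$). Segers' argument fixes an arbitrary bounded extension of $\partial_j C$ to $[0,1]^d$ and shows the remainder is uniformly $o_P(1)$ by splitting into shrinking boundary strips, where the tied-down behaviour of the empirical and limiting processes (they vanish on the faces) together with the Lipschitz property of $C$ and $C_n$ gives the control, and the interior, where continuity of $\partial_j C$ on $V_j$ applies; boundedness of the extension, not decay compensating a singularity, is what makes the strip estimate work. With those two repairs your sketch is a faithful account of the cited proof.
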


The limiting behavior of (\ref{empirical_copula_process}) under the hypothesis of independence follows directly from Theorem \ref{theorem::empirical_copula_process}:

\begin{corollary}
\label{corollary::independence_empirical_copula_process}
Under the null of independence, the empirical copula process converges weakly in $\mathbb{D}$ to tight centred Gaussian process 

\begin{equation}
\mathcal{M}(\textbf{u}) = \mathcal{B}(\textbf{u}) - \sum_{i=1}^d \left (\prod_{j=1,j\neq i}^d u_j \right ) \mathcal{B}(1,..,1,u_i,1,..,1),
\end{equation} for $\textbf{u} \in [0,1]^d$, where $\mathcal{B}$ is multivariate tied-down Brownian bridge on $[0,1]^d$ with covariance function 
\begin{equation}
\label{independence_process_B_covariance}
E[\mathcal{B}(\textbf{u})\mathcal{B}(\textbf{u}')] = \prod_{j=1}^d \min\{u_j,u'_j\} - \prod_{j=1}^d u_ju'_j.
\end{equation}
\end{corollary}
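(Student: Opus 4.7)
The plan is to derive Corollary \ref{corollary::independence_empirical_copula_process} as a direct specialization of Theorem \ref{theorem::empirical_copula_process} to the independence copula $C^\perp(\textbf{u}) = \prod_{j=1}^d u_j$. First I would verify the hypothesis of Theorem \ref{theorem::empirical_copula_process}: on each set $V_j$ the partial derivative $\partial C^\perp / \partial u_i = \prod_{k=1, k \neq i}^d u_k$ is a polynomial in the remaining coordinates, hence exists and is continuous on all of $[0,1]^d$, so the smoothness requirement is trivially satisfied. With the hypothesis in place the conclusion of Theorem \ref{theorem::empirical_copula_process} applies and yields weak convergence in $\mathbb{D}$ of $\sqrt{n}[C_n(\textbf{u}) - \prod_{j=1}^d u_j]$ to the tight centred Gaussian process $\mathcal{G}$.

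Next I would substitute $C = C^\perp$ into the formula for $\mathcal{G}$. Using the computed partial derivatives, each summand $\partial_i C(\textbf{u}) \mathcal{B}(1,\dots,1,u_i,1,\dots,1)$ becomes $\left(\prod_{j=1, j \neq i}^d u_j\right) \mathcal{B}(1,\dots,1,u_i,1,\dots,1)$, so that
\begin{equation}
\mathcal{G}(\textbf{u}) = \mathcal{B}(\textbf{u}) - \sum_{i=1}^d \left(\prod_{j=1, j\neq i}^d u_j\right) \mathcal{B}(1,\dots,1,u_i,1,\dots,1) = \mathcal{M}(\textbf{u}),
\end{equation}
which matches the stated form of $\mathcal{M}$.

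Finally I would specialize the covariance of $\mathcal{B}$. From Theorem \ref{theorem::empirical_copula_process}, $\mathcal{B}$ is a tied-down Brownian bridge on $[0,1]^d$ with covariance $C(\textbf{u} \wedge \textbf{u}') - C(\textbf{u}) C(\textbf{u}')$. Evaluating at $C = C^\perp$ gives
\begin{equation}
E[\mathcal{B}(\textbf{u})\mathcal{B}(\textbf{u}')] = \prod_{j=1}^d \min\{u_j, u'_j\} - \prod_{j=1}^d u_j u'_j,
\end{equation}
which is exactly (\ref{independence_process_B_covariance}). Since every step is a direct substitution once the smoothness condition has been verified, there is no real obstacle; the only point requiring a moment of care is confirming that the partial derivatives of $C^\perp$ are continuous on the relevant sets $V_j$, which here is immediate because they are polynomials.
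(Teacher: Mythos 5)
Your proposal is correct and matches the paper's own route: the corollary is obtained exactly as you describe, by checking that $\partial C^\perp/\partial u_i = \prod_{j\neq i} u_j$ is continuous on each $V_j$ and then substituting $C = C^\perp$ into the limit process and the covariance function of Theorem \ref{theorem::empirical_copula_process}. Nothing is missing; the paper itself states the result as following directly from that theorem.
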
 The process $\mathcal{M}(\textbf{u})$ is a multidimensional completely tucked Brownian sheet, with the value of $\mathcal{M}(\textbf{u})$ along the boundaries of the $[0,1]^d$ almost surely zero. For additional details see Example 3.8.2 in \cite{wellner}.

\subsection{Weighted Cram\'{e}r-von Mises statistic}

The focus in this paper is on tests based on weighted Cram\'{e}r-von Mises functionals of (\ref{empirical_copula_process}) defined as
\begin{equation}
\label{weighted_statistic}
W_n = n \int_{[0,1]^d} \left ( C_n(\textbf{u}) - \prod_{j=1}^d u_j \right ) ^2 w(\textbf{u}) d\textbf{u},\hspace{4mm}\textbf{u} \in [0,1]^d,
\end{equation} where $w(\textbf{u})$ is an arbitrary function that weighs the deviations of $C_n$ from $C^\perp$ throughout the support $[0,1]^d$. The objective of $w(\textbf{u})$ is to shift emphasis between the different parts of the support of the joint distribution of $X_1,..,X_d$ giving the test added flexibility, and as we show in Section \ref{section::test_power_simulation}, making the statistic more sensitive to certain types of dependence. The ability to manipulate test power through the choice of the weights has added significance in many settings: for example, when independence tests are used to test for goodness of fit as in \cite{Quessy2010}.

\subsection{Asymptotic properties of weighted statistics}

Several regularity requirements need to be placed on the weighting function $w(\textbf{u})$ to ensure that the integral in (\ref{weighted_statistic}) exist. Specifically, we assume that $w(\textbf{u})$ lies in $\mathbb{D}$, is non-negative and integrable on $[0,1]^d$. Theorem \ref{theorem::convergence_in_distribution} characterizes the limiting distribution of $W_n$ under independence in the general case.

\begin{theorem}
\label{theorem::convergence_in_distribution}
Suppose that for each $j \in \{1,..,d\}$, the partial derivatives $\partial C/\partial u_j$ exist and are continuous on the set $V_{j}:=\{\textbf{u} \in [0,1]^d:0 < u_j < 1\}$. Then, under mutual independence of $X_1,..,X_d$, for any integrable $w(\textbf{u}) \in \mathbb{D}$, the statistic $W_n$ converges in distribution to
\begin{equation}
W = \int_{[0,1]^d} \mathcal{M}(\textbf{u})^2 w(\textbf{u}) d\textbf{u},\hspace{4mm}\textbf{u} \in [0,1]^d,
\end{equation} where $\mathcal{M}(\textbf{u})$ is Brownian sheet as defined in Corollary \ref{corollary::independence_empirical_copula_process}.
\end{theorem}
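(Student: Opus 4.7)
The plan is a direct application of the continuous mapping theorem to the weak convergence provided by Corollary~\ref{corollary::independence_empirical_copula_process}. Define the functional $\phi : \mathbb{D} \to \mathbb{R}$ by
\begin{equation}
\phi(f) = \int_{[0,1]^d} f(\mathbf{u})^2\, w(\mathbf{u})\, d\mathbf{u},
\end{equation}
so that $W_n = \phi\bigl(\sqrt{n}\,[C_n - C^\perp]\bigr)$ and the target limit is $\phi(\mathcal{M})$. It then suffices to check two things: that the empirical copula process converges weakly in $\mathbb{D}$, and that $\phi$ is continuous with respect to the uniform metric.

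The first point is immediate under the null: the independence copula $C^\perp(\mathbf{u}) = \prod_j u_j$ is infinitely differentiable, so the smoothness hypothesis of Theorem~\ref{theorem::empirical_copula_process} is trivially satisfied, and Corollary~\ref{corollary::independence_empirical_copula_process} yields weak convergence of $\sqrt{n}[C_n - C^\perp]$ to $\mathcal{M}$ in $\mathbb{D}$. For the second point, if $f_n \to f$ in the uniform metric on $\mathbb{D}$, then $\|f_n\|_\infty$ remains bounded and
\begin{equation}
\|f_n^2 - f^2\|_\infty \leq \|f_n - f\|_\infty \, \bigl(\|f_n\|_\infty + \|f\|_\infty\bigr) \longrightarrow 0.
\end{equation}
Since $w$ is bounded, non-negative and integrable on $[0,1]^d$,
\begin{equation}
|\phi(f_n) - \phi(f)| \leq \|f_n^2 - f^2\|_\infty \int_{[0,1]^d} w(\mathbf{u})\, d\mathbf{u} \longrightarrow 0,
\end{equation}
so $\phi$ is continuous on all of $\mathbb{D}$. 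The continuous mapping theorem then delivers $W_n = \phi\bigl(\sqrt{n}\,[C_n - C^\perp]\bigr) \to \phi(\mathcal{M}) = W$ in distribution.

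The main (and rather mild) technical point is the standard measurability issue arising because $(\mathbb{D}, \|\cdot\|_\infty)$ is non-separable, so weak convergence here must be interpreted in the Hoffmann--J{\o}rgensen sense, as is already the case in the references cited for Theorem~\ref{theorem::empirical_copula_process}. Since the limit $\mathcal{M}$ is tight with almost surely continuous sample paths on the compact cube $[0,1]^d$, and $\phi$ is continuous everywhere on $\mathbb{D}$, the version of the continuous mapping theorem applicable in this setting (for example, Theorem 1.3.6 in van der Vaart and Wellner) applies without modification. Finiteness of $W$ follows from the almost sure boundedness of $\mathcal{M}$ on $[0,1]^d$ together with the integrability of $w$.
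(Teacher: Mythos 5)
Your proposal is correct and takes essentially the same route as the paper: weak convergence of $\sqrt{n}\,[C_n - C^\perp]$ to $\mathcal{M}$ under the null (Corollary~\ref{corollary::independence_empirical_copula_process}) combined with the continuous mapping theorem, with continuity resting on the boundedness and integrability of $w$. The only difference is that where the paper first pushes the process through $f \mapsto f\sqrt{w}$ inside $\mathbb{D}$ and then states that convergence of the integral ``follows,'' you apply the single functional $\phi(f)=\int_{[0,1]^d} f(\mathbf{u})^2 w(\mathbf{u})\,d\mathbf{u}$ directly and verify its continuity, which in fact makes explicit the final step the paper leaves implicit.
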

\begin{proof}
For any $w\in \mathbb{D}$ and $\textbf{u}\in[0,1]^d$, let $\psi(f)(\textbf{u}):\mathbb{D} \rightarrow \mathbb{D}$ be a map defined as $\psi(f)(\textbf{u})=f(\textbf{u})\sqrt{w(\textbf{u})}$. Let $g_n(\textbf{u}) = \sqrt{n}[C_n(\textbf{u}) - C^\perp(\textbf{u})]$. For any sequence $f_n$, $n=1,2,..$ in $\mathbb{D}$ s.t. $f_n \rightarrow f$, we have that 
\begin{equation}
\sup_{\textbf{u}} | \psi(f_n)(\textbf{u}) - \psi(f)(\textbf{u}) | \leq \sup_{\textbf{u}} |f_n(\textbf{u}) - f(\textbf{u})| \sup_{\textbf{u}} |w(\textbf{u})| \rightarrow 0,
\end{equation}
 since $w(\textbf{u})$ is bounded in absolute value. Then, $\psi(f_n)(\textbf{u}) \rightarrow \psi(f)(\textbf{u})$, and continuous mapping theorem implies that $\psi(g_n)(\textbf{u}) = \sqrt{n}[C_n(\textbf{u}) - C^\perp(\textbf{u})]\sqrt{w(\textbf{u})}$ converges weakly in $\mathbb{D}$ to $\psi(\mathcal{M}(\textbf{u})) = \mathcal{M}(\textbf{u}) \sqrt{w}(\textbf{u})$, and convergence in distribution to $W$ follows.
\end{proof} Boundedness of $w(\textbf{u})$ and of the covariance function of $\mathcal{M}(\textbf{u})$ ensure that the limiting distribution of $W_n$ is non-degenerate. It is also clear that the distribution of $W$ does not depend on neither the marginal not the joint distributions of $X_j$'s, meaning that the asymptotic critical values for (\ref{weighted_statistic}) can be easily tabulated, given $w(\textbf{u})$ and $d$.

\subsection{Computation from sample ranks}
\label{section::integrability_and_computation}

For a set $w(\textbf{u})$, computation of the test statistic in (\ref{weighted_statistic}) is only as difficult as the integration of the weighting function. To see this, consider maps $\mu_1(w)(\textbf{a})$, $\mu_2(w)(\textbf{a})$ and $\mu_3(w)$ from $\mathbb{D}$ onto $\mathbb{D}$ given by
\begin{align}
\mu_1(w)(\textbf{a}) &= \int_{a_1}^1..\int_{a_d}^1 w(u_1,..,u_d) du_1..du_d,\hspace{4mm}\textbf{a} \in [0,1]^d, \\
\mu_2(w)(\textbf{a}) &= \int_{a_1}^1..\int_{a_d}^1 u_1u_2..u_d w(u_1,..,u_d)du_1..du_d,\hspace{4mm}\textbf{a} \in [0,1]^d, 
\end{align} and lastly
\begin{equation}
\mu_3(w) = \int_0^1 .. \int_0^1 u_1^2u_2^2..u_d^2 w(u_1,..,u_d)du_1..du_d.
\end{equation} Assumptions on $w(\textbf{u})$ imply that $\mu_1(w)(\textbf{a})$, $\mu_2(w)(\textbf{a})$ and $\mu_3(w)$ exist for any $\textbf{a} \in [0,1]^d$. Letting $\hat{U}_{i,j} = \hat{F}_j(X_{i,j})$ as before, straightforward calculation immediately yields closed-form expression of $W_n$ in terms of $\hat{U}_{i,j}$ and $\mu_1(w)(\textbf{a})$, $\mu_2(w)(\textbf{a})$ and $\mu_3(w)$:

\begin{proposition}
\label{general_closed_form}
We have
\begin{align}
W_n &= \sum_{i=1}^n \left [ \frac{1}{n} \sum_{l=1}^n \mu_1(w)(\hat{U}_{i,1}\vee\hat{U}_{l,1},..,\hat{U}_{i,d}\vee\hat{U}_{l,d}) - 2\mu_2(w)(\hat{U}_{i,1},..,\hat{U}_{i,d}) \right ] \\
&+ n \mu_3(w).
\end{align} 
\end{proposition}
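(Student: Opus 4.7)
The plan is to expand the square in the integrand of $W_n$ into three pieces, substitute the definition of $C_n$ as an empirical c.d.f. of the pseudo-observations $\hat{U}_{i,j}$, and then interchange the (finite) sums with the integral using Fubini's theorem. Because $C_n$ is a sum of indicator functions, each resulting integral collapses to an integral over an upper-corner box of $[0,1]^d$, which is precisely what the functionals $\mu_1$, $\mu_2$, $\mu_3$ are designed to encode.

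In detail, I would first write
\begin{equation*}
W_n = n\!\int_{[0,1]^d}\! C_n(\mathbf{u})^2 w(\mathbf{u})\,d\mathbf{u} \;-\; 2n\!\int_{[0,1]^d}\! C_n(\mathbf{u})\prod_{j=1}^d u_j\, w(\mathbf{u})\,d\mathbf{u} \;+\; n\!\int_{[0,1]^d}\!\prod_{j=1}^d u_j^2\, w(\mathbf{u})\,d\mathbf{u}.
\end{equation*}
The third term is $n\mu_3(w)$ by definition. For the cross term, inserting $C_n(\mathbf{u})=\tfrac{1}{n}\sum_i\prod_j \mathbb{I}(\hat{U}_{i,j}\le u_j)$ cancels one power of $n$ and confines the integration in the $j$-th coordinate to $[\hat{U}_{i,j},1]$, producing $2\sum_i \mu_2(w)(\hat{U}_{i,1},\ldots,\hat{U}_{i,d})$. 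For the quadratic term, expanding $C_n(\mathbf{u})^2$ yields a double sum of products $\prod_j \mathbb{I}(\hat{U}_{i,j}\le u_j)\mathbb{I}(\hat{U}_{l,j}\le u_j)=\prod_j \mathbb{I}(\hat{U}_{i,j}\vee\hat{U}_{l,j}\le u_j)$; the factor $n\cdot n^{-2}=n^{-1}$ remains in front, and the integral restricts to the box with lower corner $(\hat{U}_{i,1}\vee\hat{U}_{l,1},\ldots,\hat{U}_{i,d}\vee\hat{U}_{l,d})$, giving $\tfrac{1}{n}\sum_i\sum_l \mu_1(w)(\hat{U}_{i,1}\vee\hat{U}_{l,1},\ldots,\hat{U}_{i,d}\vee\hat{U}_{l,d})$. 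Grouping the $i$-th summand then yields the stated formula.

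There is no real obstacle: the integrability and boundedness assumptions on $w$ guarantee that $\mu_1,\mu_2,\mu_3$ are well-defined and finite on $[0,1]^d$, and since both sums in $C_n$ and $C_n^2$ are finite, Fubini's theorem applies without any subtlety. The only thing to watch is bookkeeping of the powers of $n$: the leading $n$ combines with $n^{-2}$ in the quadratic piece to leave $n^{-1}$, it cancels the $n^{-1}$ in the linear piece, and it multiplies $\mu_3(w)$ directly in the third piece.
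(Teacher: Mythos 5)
Your proposal is correct and follows essentially the same route as the paper's proof: expand the square into the quadratic, cross, and $\mu_3$ terms, substitute the indicator representation of $C_n$, and collapse each integral to an upper-corner box identified with $\mu_1$ and $\mu_2$ evaluated at the (componentwise maxima of the) pseudo-observations. The bookkeeping of the powers of $n$ matches the paper's computation exactly.
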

\begin{proof}
\begin{align}
W_n &= n \int_{[0,1]^d} \left ( C_n(\textbf{u}) - C^\perp(\textbf{u}) \right ) ^2 w(\textbf{u}) du \\
&= n\int_{[0,1]^d} C_n(\textbf{u})^2 w(\textbf{u}) du - 2n\int_{[0,1]^d} C_n(\textbf{u}) C^\perp(\textbf{u}) w(\textbf{u}) du \\
&+ n\int_{[0,1]^d} C^\perp(\textbf{u})^2 w(\textbf{u}) du.
\end{align}
For the first term, we have
\begin{align}
n\int_{[0,1]^d}&C_n(\textbf{u})^2 w(\textbf{u}) du \\
&= \frac{1}{n}\int_{[0,1]^d} \left (\sum_{i=1}^n \prod_{j=1}^d \mathbb{I}(\hat{U}_{i,j} \leq u_j) \right )^2 w(\textbf{u}) du \\
&= \frac{1}{n}\sum_{i=1}^n \sum_{l=1}^n \int_{[0,1]^d} \prod_{j=1}^d \mathbb{I}(\hat{U}_{i,j} \leq u_j)\mathbb{I}(\hat{U}_{l,j} \leq u_j) w(\textbf{u}) du \\
&= \frac{1}{n}\sum_{i=1}^n \sum_{l=1}^n \int_{\hat{U}_{i,1}\vee\hat{U}_{l,1}}^1 ... \int_{\hat{U}_{i,d}\vee\hat{U}_{l,d}}^1 w(\textbf{u}) du.
\end{align} Similarly, for the second term,
\begin{align}
2n\int_{[0,1]^d}&C_n(\textbf{u}) C^\perp(\textbf{u}) w(\textbf{u}) du\\
&=2 \int_{[0,1]^d} \left (\sum_{i=1}^n \prod_{j=1}^d \mathbb{I}(\hat{U}_{i,j} \leq u_j) u_j \right ) w(\textbf{u})   du \\
&= 2 \sum_{i=1}^n \int_{[0,1]^d} \prod_{j=1}^d \mathbb{I}(\hat{U}_{i,j} \leq u_j)  u_j w(\textbf{u}) du \\
&= 2\sum_{i=1}^n \int_{\hat{U}_{i,1}}^1 ..\int_{\hat{U}_{i,d}}^1 \prod_{j=1}^d u_j w(\textbf{u}) du.
\end{align} Lastly, substituting $C^\perp(\textbf{u}) = \prod_{j=1}^d u_j$, we get the third term. Grouping the terms and substituting the expressions for $\mu_1$, $\mu_2$ and $\mu_3$ yields the desired result.
\end{proof} For any integrable $w(\textbf{u})\in \mathbb{D}$ , computational formula for $W_n$ can be obtained by deriving expressions for $\mu_1(\textbf{a})$, $\mu_2(\textbf{a})$ and $\mu_3$. For example, consider setting $w(\textbf{u}) = \prod_{j=1}^d u_j^{2\beta_j}$, for $\beta = (\beta_1,...,\beta_d) \in \mathbb{R}^d$, which yields the weighted statistic of \cite{Deheuvels2007}, which we denote by $D_n$:
\begin{equation}
\label{equation::deheuvels_statistic}
D_n = n \int_{[0,1]^d} \left (C_n(\textbf{u}) - \prod_{j=1}^d u_j \right ) ^2 \prod_{j=1}^d u_j^{2\beta_j} d\textbf{u}, \hspace{4mm} \textbf{u} \in [0,1]^d,
\end{equation} The computational formula for $D_n$ in terms of $\hat{U}_{i,j}$, $i=1,..,n$, $j=1,..,d$ follows directly from Proposition \ref{general_closed_form}:
\begin{proposition}
\label{deheuvels_closed_form}
When $w(\textbf{u}) = \prod_{j=1}^d u_j^{2\beta_j}$, for $\beta = (\beta_1,...,\beta_d) \in \mathbb{R}^d$, we have that
\begin{align}
D_n &= \sum_{i=1}^n \left [ \frac{1}{n} \sum_{l=1}^n \prod_{j=1}^d \frac{(1-\hat{U}_{i,j}\vee \hat{U}_{l,j})^{2\beta_j +1}}{2\beta_j +1} - 2\prod_{j=1}^d \frac{(1-\hat{U}_{i,j})^{2\beta_j +2}}{2\beta_j +2} \right ] \\
&+ n \prod_{j=1}^d (2\beta_j +3)^{-1}.
\end{align}
\end{proposition}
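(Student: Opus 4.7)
The plan is to specialize Proposition \ref{general_closed_form} to the choice $w(\textbf{u}) = \prod_{j=1}^d u_j^{2\beta_j}$, which reduces everything to computing the three auxiliary integrals $\mu_1(w)(\textbf{a})$, $\mu_2(w)(\textbf{a})$, and $\mu_3(w)$ in closed form. The crucial structural feature that makes this tractable is that $w$ factorizes across coordinates; consequently, by Tonelli's theorem each of the $d$-dimensional integrals defining $\mu_1, \mu_2, \mu_3$ collapses into a product of one-dimensional integrals involving only pure powers of $u_j$.

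First, I would briefly note admissibility: assuming $\beta_j$ is such that the weight lies in $\mathbb{D}$ and is integrable on $[0,1]^d$ (which holds, e.g., for $\beta_j\geq 0$), the hypotheses of Proposition \ref{general_closed_form} are satisfied. Then I would use the product structure to obtain
\begin{align*}
\mu_1(w)(\textbf{a}) &= \prod_{j=1}^d \int_{a_j}^1 u_j^{2\beta_j}\, du_j, \\
\mu_2(w)(\textbf{a}) &= \prod_{j=1}^d \int_{a_j}^1 u_j^{2\beta_j+1}\, du_j, \\
\mu_3(w) &= \prod_{j=1}^d \int_0^1 u_j^{2\beta_j+2}\, du_j = \prod_{j=1}^d \frac{1}{2\beta_j+3},
\end{align*}
with each one-dimensional integral evaluated by the elementary power rule.

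Finally, I would substitute these closed-form expressions back into the general formula of Proposition \ref{general_closed_form}, taking $\textbf{a} = (\hat{U}_{i,1}\vee \hat{U}_{l,1}, \ldots, \hat{U}_{i,d}\vee \hat{U}_{l,d})$ for the $\mu_1$ contribution and $\textbf{a} = (\hat{U}_{i,1}, \ldots, \hat{U}_{i,d})$ for the $\mu_2$ contribution. There is no real analytic obstacle here — the argument is a direct specialization of the general result — so the only work is careful bookkeeping of the exponents $2\beta_j+1$, $2\beta_j+2$, $2\beta_j+3$ and of the upper-versus-lower endpoints after integrating, in order to align the final expression with the form stated in the proposition.
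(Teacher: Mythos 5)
Your route is the same as the paper's: specialize Proposition \ref{general_closed_form} to the product weight and evaluate $\mu_1$, $\mu_2$, $\mu_3$ coordinate by coordinate via Tonelli and the power rule. The method is sound, and your admissibility caveat is a point worth keeping: the proposition nominally allows any $\beta\in\mathbb{R}^d$, but boundedness of $w$ requires $\beta_j\geq 0$ (and integrability alone requires $2\beta_j>-1$), which the paper passes over.

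However, the final step you describe --- ``aligning'' the power-rule output with the displayed formula --- is precisely where the plan cannot be carried out as stated. The power rule gives
\begin{equation*}
\int_{a_j}^1 u_j^{2\beta_j}\,du_j \;=\; \frac{1-a_j^{2\beta_j+1}}{2\beta_j+1},
\qquad
\int_{a_j}^1 u_j^{2\beta_j+1}\,du_j \;=\; \frac{1-a_j^{2\beta_j+2}}{2\beta_j+2},
\end{equation*}
whereas the proposition (and the paper's own proof of it) writes $(1-a_j)^{2\beta_j+1}/(2\beta_j+1)$ and $(1-a_j)^{2\beta_j+2}/(2\beta_j+2)$; the two agree only when $\beta_j=0$. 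So an honest execution of your outline produces a formula for $D_n$ whose numerators are $1-(\hat{U}_{i,j}\vee\hat{U}_{l,j})^{2\beta_j+1}$ and $1-\hat{U}_{i,j}^{2\beta_j+2}$, not the displayed ones (the $\mu_3$ term $\prod_j(2\beta_j+3)^{-1}$ is fine); the same slip propagates to the paper's $P_n$ formula, which is the case $\beta_j=1$. You should either state the corrected numerators explicitly or flag the discrepancy with the stated display, rather than asserting that the bookkeeping will match it.
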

\begin{proof}
When $w(\textbf{u}) = \prod_{j=1}^d u_j^{2\beta_j}$, for $\beta = (\beta_1,...,\beta_d) \in \mathbb{R}^d$, using straightforward integration we have that, for $\textbf{a} \in [0,1]^d$,
\begin{align}
\mu_1(\textbf{a}) &= \int_{a_1}^1..\int_{a_d}^1 \prod_{j=1}^d u_j^{2\beta_j} du = \prod_{j=1}^d \frac{(1-a_j)^{2\beta_j + 1}}{2\beta_j + 1}, \\
\mu_2(\textbf{a}) &= \int_{a_1}^1..\int_{a_d}^1 \prod_{j=1}^d u_j^{2\beta_j + 1} du = \prod_{j=1}^d \frac{(1-a_j)^{2\beta_j + 2}}{2\beta_j + 2}, \\
\mu_3 &= \int_{[0,1]^d} \prod_{j=1}^d u_j^{2\beta_j + 2} du = \prod_{j=1}^d \frac{1}{2\beta_j + 3}.
\end{align} Substituting for $\mu_1$, $\mu_2$ and $\mu_3$ into the expression from Proposition \ref{general_closed_form} we get the desired result.
\end{proof} 
Similarly, repeating the derivation with $w(\textbf{u}) = 1$, $\textbf{u} \in [0,1]^d$, easily yields the expression of uniformly-weighted statistic, which we denote by $U_n$, given by:
\begin{equation}
U_n = n \int_{[0,1]^d} \left ( C_n(\textbf{u}) - \prod_{j=1}^d u_j \right) ^2 d\textbf{u},\hspace{4mm}\textbf{u} \in [0,1]^d,
\end{equation} in terms of percentile ranks:
\begin{proposition}
When w(\textbf{u}) = 1, for $\textbf{u} \in [0,1]^d$, we have that
\begin{align}
U_n&= \sum_{i=1}^n \left [ \frac{1}{n} \sum_{l=1}^n \prod_{j=1}^d (1-\hat{U}_{i,j}\vee\hat{U}_{l,j}) - 2\prod_{j=1}^d \frac{1}{2}(1-\hat{U}_{i,j}^2)  \right ] \\
&+ n 3^{-d}.
\end{align}
\end{proposition}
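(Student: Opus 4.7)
The plan is to obtain this result as an immediate specialization of Proposition \ref{general_closed_form} to the constant weight $w(\textbf{u}) \equiv 1$. Since $w$ is bounded and integrable on $[0,1]^d$, all the hypotheses needed for Proposition \ref{general_closed_form} are satisfied, so the only task left is to evaluate the three auxiliary integrals $\mu_1(\textbf{a})$, $\mu_2(\textbf{a})$, $\mu_3$ with $w \equiv 1$ and substitute.

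First I would compute $\mu_1(\textbf{a}) = \int_{a_1}^1\!\cdots\!\int_{a_d}^1 \, du_1 \cdots du_d$, which factors into a product of one-dimensional integrals and evaluates to $\prod_{j=1}^d (1-a_j)$. Next I would compute $\mu_2(\textbf{a}) = \int_{a_1}^1\!\cdots\!\int_{a_d}^1 u_1 \cdots u_d \, du_1 \cdots du_d$, which again factors and gives $\prod_{j=1}^d \tfrac{1}{2}(1-a_j^2)$. Finally, $\mu_3 = \int_{[0,1]^d} u_1^2 \cdots u_d^2 \, d\textbf{u} = \prod_{j=1}^d \tfrac{1}{3} = 3^{-d}$. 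One could equivalently note that these are exactly the formulas from the proof of Proposition \ref{deheuvels_closed_form} evaluated at $\beta_1 = \cdots = \beta_d = 0$, so no new computation is actually required.

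Plugging $\mu_1$, $\mu_2$, $\mu_3$ into the expression of Proposition \ref{general_closed_form} with $\textbf{a}=(\hat{U}_{i,1}\vee\hat{U}_{l,1},\ldots,\hat{U}_{i,d}\vee\hat{U}_{l,d})$ in the double sum and $\textbf{a}=(\hat{U}_{i,1},\ldots,\hat{U}_{i,d})$ in the single sum yields the stated closed-form expression for $U_n$. There is no genuine obstacle here: the argument is a one-line reduction to Proposition \ref{general_closed_form} followed by three elementary integrations, and the only thing to watch is merely bookkeeping, namely keeping the indices on $\hat{U}_{i,j}\vee\hat{U}_{l,j}$ straight and not dropping the $1/n$ factor from the inner average in the first term.
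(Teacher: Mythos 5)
Your proposal is correct and follows exactly the paper's own proof: specialize Proposition \ref{general_closed_form} to $w \equiv 1$, compute $\mu_1(\textbf{a}) = \prod_{j=1}^d (1-a_j)$, $\mu_2(\textbf{a}) = \prod_{j=1}^d \tfrac{1}{2}(1-a_j^2)$, $\mu_3 = 3^{-d}$, and substitute. The extra observation that these coincide with the formulas of Proposition \ref{deheuvels_closed_form} at $\beta_1 = \cdots = \beta_d = 0$ is a harmless shortcut, not a different method.
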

\begin{proof}
For $w(\textbf{u})=1$, $\textbf{u}\in[0,1]^d$, we have that, for $\textbf{a} \in [0,1]^d$,
\begin{align}
\mu_1(\textbf{a})& = \prod_{j=1}^d (1-a_j),\\
\mu_2(\textbf{a})& = \prod_{j=1}^d \frac{1}{2} (1-a_j^2),
\end{align} and $\mu_3 = 3^{-d}$. The rest follows from Proposition \ref{general_closed_form}.
\end{proof}

\section{The choice of the weights}
\label{section::weights_choice_problem}

The general nature of the requirements imposed on the weighting function by Theorem \ref{theorem::convergence_in_distribution} implies that a wide range of statistics given by (\ref{weighted_statistic}) and defined by the choice of $w(\textbf{u})$ is possible. This raises a natural question about the existence, or non-existence of $w(\textbf{u})$ that may be optimal under some sequence of alternatives. The issue of optimality is left for future work; the aim of this section is to explore the effect of weights on test power under a wide variety of alternatives and to provide some guidance with the selection of $w(\textbf{u})$ and future search for optimal weights. We review some issues surrounding the choice of the weights and propose five weighted statistics that correspond to copula models most-commonly encountered in practice. Computational formulas and asymptotic critical values for the five statistics are also provided.

\subsection{Anderson-Darling weights}

The first addition of weights into a Cram\'{e}r-von Mises-type statistic is perhaps due to \cite{Anderson1952a} who use weights to add flexibility to a goodness of fit test for empirical distribution functions. \cite{Anderson1952a} propose setting the weights so that to give the integrated stochastic process unit variance throughout its domain. Given the prominence of the statistic of \cite{Anderson1952a}, it is natural to begin with equivalent variance-driven weights based on $\mathcal{M}(\textbf{u})$, which we consider next. For notational convenience, for any $\textbf{u} \in [0,1]^d$, let $\textbf{u}_{\{i\}} = (1,..,u_i,..,1)\in[0,1]^d$. From Corollary \ref{corollary::independence_empirical_copula_process} it is easy to verify that under independence, the variance function of the process $\mathcal{M}(\textbf{u})$ can be written as
\begin{align}
\label{unweighted_process_variance}
&E[\mathcal{M}(\textbf{u})^2]=E\left[\left(\mathcal{B}(\textbf{u}) - \sum_{i=1}^d \left (\prod_{j=1,j\neq i}^d u_j \right ) \mathcal{B}(u_{\{i\}})\right)^2\right] \\
&=\prod_{j=1}^d u_j(1-u_j) - 2\sum_{i=1}^d \left (\prod_{j=1,j\neq i}^d u_j \right )\left (\prod_{j=1}^d u_j \right )(1-u_i) \\
&+\sum_{i=1}^d \left ( \prod_{j=1,j\neq i}^d u_j^2 \right ) (u_i - u_i^2),
\end{align} for $\textbf{u} \in [0,1]^d$. Letting the weights $w_a(\textbf{u})$ be equal to the reciprocal of the variance, we have that $E[\mathcal{M}(\textbf{u})^2 w_a(\textbf{u})] = 1$, $\forall \textbf{u} \in [0,1]^d$. We refer to such weights as Anderson-Darling weights, and the corresponding weighted test statistic as Anderson-Darling independence statistic. Interestingly, here such Anderson-Darling weights do not satisfy integrability requirements set out in the previous section, meaning that the corresponding Anderson-Darling independence statistic based on the empirical copula process does not exist. To see this, consider a case when $d=2$, in which Anderson-Darling weights are given by
\begin{equation}
w_a(\textbf{u}) = \left [u_1u_2(u_1-1)(u_2-1) \right]^{-1},\hspace{4mm}\textbf{u} \in [0,1]^2.
\end{equation} It is easy to verify that for any $\textbf{a} \in [0,1]^d$, the corresponding $\mu_1(\textbf{a})$ is infinite, meaning that the integral in (\ref{weighted_statistic}) does not exist. 

The use of Anderson-Darling weights with an offset to ensure integrability as in \cite{Genest2012} can provide a solution to this problem, and represents scope for future work.

\subsection{Median weights}
In practice, weight selection may be motivated by specific interest in certain types of dependence. For example, when the application is such that dependence among observations closer to the median of the distribution is of interest and outliers in the data are of lesser importance, a weighting function which assigns a lower weight to the tails may be used. 

Among the better-known copulas which posses this property are Gaussian, Ali-Mikhail-Haq and Frank families which deviate from independence copula the most around the median and least in the tails. To this end, setting weights to $w_m(\textbf{u}) = \prod_{j=1}^d u_j(1-u_j)$, for $\textbf{u}\in[0,1]^d$ will place greater emphasis on observations around the median of the distribution where largest deviations occur, which leads to a weighted statistic given by
\begin{equation}
M_n = n \int_{[0,1]^d} \left (C_n(\textbf{u}) - \prod_{j=1}^d u_j \right )^2 \prod_{j=1}^d u_j(1-u_j)du,\hspace{4mm}\textbf{u} \in [0,1]^d.
\end{equation} The function $w_m(\textbf{u})$ is bounded and integrable, meaning that asymptotic properties of $M_n$ follow directly from Theorem \ref{theorem::convergence_in_distribution}. As before, computational formula for $M_n$ can be obtained using Proposition \ref{general_closed_form}.
\begin{proposition}
We have that
\begin{align}
M_n &=\sum_{i=1}^n \left [ \frac{1}{n} \sum_{l=1}^n \prod_{j=1}^d \frac{1}{6}(2(\hat{U}_{i,j}\vee\hat{U}_{l,j})+1)(\hat{U}_{i,j}\vee\hat{U}_{l,j}-1)^2 \right.\\
&\left. - 2\prod_{j=1}^d\left (\frac{1}{4}(\hat{U}_{i,j}^4 -1)+\frac{1}{3}(1-\hat{U}_{i,j}^3) \right ) \right ] + \frac{n}{20^d}.
\end{align}
\end{proposition}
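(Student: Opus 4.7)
The plan is to apply Proposition \ref{general_closed_form} directly, so the task reduces to evaluating the three auxiliary functionals $\mu_1(\textbf{a})$, $\mu_2(\textbf{a})$ and $\mu_3$ for the specific weight $w_m(\textbf{u}) = \prod_{j=1}^d u_j(1-u_j)$. Since $w_m$ factors across coordinates, each of the three multivariate integrals splits into a product of one-dimensional integrals, so no real multivariate integration is needed.

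First I would compute $\mu_1(\textbf{a})$. For each coordinate,
\begin{equation}
\int_{a_j}^1 u_j(1-u_j)\,du_j = \Bigl[\tfrac{u_j^2}{2} - \tfrac{u_j^3}{3}\Bigr]_{a_j}^1 = \tfrac{1}{6} - \tfrac{a_j^2}{2} + \tfrac{a_j^3}{3} = \tfrac{1}{6}(2a_j+1)(a_j-1)^2,
\end{equation}
which is the form appearing inside the double sum of the proposition (evaluated at $a_j = \hat{U}_{i,j}\vee\hat{U}_{l,j}$). The only mild algebraic step here is recognizing the factorization $2a^3 - 3a^2 + 1 = (2a+1)(a-1)^2$.

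Next I would compute $\mu_2(\textbf{a})$. The extra factor $\prod_j u_j$ inside the integral raises the power of $u_j$ by one, giving
\begin{equation}
\int_{a_j}^1 u_j^2(1-u_j)\,du_j = \Bigl[\tfrac{u_j^3}{3} - \tfrac{u_j^4}{4}\Bigr]_{a_j}^1 = \tfrac{1}{4}(a_j^4-1) + \tfrac{1}{3}(1-a_j^3),
\end{equation}
which matches the second term in the bracket. Finally, $\mu_3$ involves an extra factor of $\prod_j u_j^2$, so in each coordinate I would evaluate
\begin{equation}
\int_0^1 u_j^3(1-u_j)\,du_j = \tfrac{1}{4} - \tfrac{1}{5} = \tfrac{1}{20},
\end{equation}
yielding $\mu_3 = 20^{-d}$.

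Substituting these three expressions into the formula of Proposition \ref{general_closed_form} produces exactly the claimed identity. There is no real obstacle: the weight is bounded and integrable, so Proposition \ref{general_closed_form} applies without qualification, and the product structure of $w_m$ turns every multivariate integral into an elementary polynomial computation. The only step requiring any care is the tidying of the $\mu_1$ expression into the $(2a+1)(a-1)^2$ form used in the statement.
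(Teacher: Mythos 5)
Your proposal is correct and follows exactly the paper's own route: apply Proposition \ref{general_closed_form} and compute $\mu_1$, $\mu_2$, $\mu_3$ for $w_m(\textbf{u})=\prod_{j=1}^d u_j(1-u_j)$ by coordinatewise integration, with the same polynomial evaluations (including the factorization $2a^3-3a^2+1=(2a+1)(a-1)^2$, which matches the paper's $\tfrac{1}{6}(2a_j+1)(1-a_j)^2$). Nothing is missing.
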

\begin{proof}
When $w(\textbf{u}) = \prod_{j=1}^d u_j(1-u_j)$, for $\textbf{u}\in[0,1]^d$, we have that, for $a \in [0,1]^d$,
\begin{align}
\mu_1(\textbf{a})&=\prod_{j=1}^d \frac{1}{6}(2a_j+1)(1-a_j)^2,\\
\mu_2(\textbf{a})&=\prod_{j=1}^d\left (\frac{1}{4}(a_j^4 -1)+\frac{1}{3}(1-a_j^3) \right ),
\end{align} and $\mu_3 = 20^{-d}$. Applying Proposition \ref{general_closed_form} yields the desired result.
\end{proof} Another argument for the use of $w_m(\textbf{u})$ or similar median weights may stem from the constraints imposed on $C$ by the theory of distribution functions. Following the results of \cite{Frechet1952} and \cite{Hoeffding1940}, for any copula $C$ and for any $\textbf{u} \in [0,1]^d$, we have that $\max(u_1+u_2+..+u_d-d+1,0) \leq C(\textbf{u}) \leq \min(u_1,u_2,..,u_d)$. The functions $M(\textbf{u}) = \min(u_1,u_2,..,u_d)$, $\textbf{u} \in [0,1]^d$, and $W(\textbf{u}) = \max(u_1+u_2+..+u_d-d+1,0)$, $\textbf{u} \in [0,1]^d$, are the copula Frech\'{e}t-Hoeffding bounds, and are also copulas when $d=2$. Rearranging the inequality we have that $(C(\textbf{u}) - C^\perp(\textbf{u}))^2 \leq \max((W(\textbf{u})-C^\perp(\textbf{u}))^2,(M(\textbf{u})-C^\perp(\textbf{u}))^2)$, for any $\textbf{u}\in[0,1]^d$, meaning that the maximum amount by which the copula $C$ can deviate form independence copula $C^\perp$ varies greatly across $[0,1]^d$. Examining the right-hand side of the inequality will show that this distance is maximal precisely at the median of the distribution and is decreasing towards the tails. In that sense, weighting function $w_m(\textbf{u})$ places greater emphasis on the region where scope for deviations from independence is greatest. 
\subsection{Symmetric tail weights}
Another type of dependence which is important in applications such as risk management is tail dependence, which refers to the tendency of extreme values to be associated. Symmetric tail dependence occurs when the extremes are related regardless of their sign; for example, an extreme realization of one variable may indicate, with equal likelihood, a higher chance of observing either an extremely-large or an extremely-small value of another. Such dependence may exist in the complete absence of linear or rank correlation, making it difficult to detect. Distributions constructed using Student's t copula feature symmetric tail dependence, with the copula deviating from independence equally in all tails. When such dependence among all outliers is of interest, setting the weights to $w_t(\textbf{u}) = \prod_{j=1}^d (u_j - 0.5)^2$, $\textbf{u}\in[0,1]^d$, yields tail-weighted statistic $T_n$ given by
\begin{equation}
T_n = n \int_{[0,1]^d} \left (C_n(\textbf{u}) - \prod_{j=1}^d u_j\right)^2 \prod_{j=1}^d (u_j - 0.5)^2 du,\hspace{4mm}\textbf{u} \in [0,1]^d.
\end{equation} As before, asymptotic properties follow directly from Theorem \ref{theorem::convergence_in_distribution}, and computation formula from Proposition \ref{general_closed_form}.
\begin{proposition}
We have that
\begin{align}
T_n &= \sum_{i=1}^n \left [ \frac{1}{n} \sum_{l=1}^n \prod_{j=1}^d (24^{-1} -3^{-1}(\hat{U}_{i,j} \vee \hat{U}_{l,j} - 2^{-1})^3 ) \right. \\
&\left.- 2\prod_{j=1}^d (24^{-1} - 4^{-1}\hat{U}_{i,j}^4 + 3^{-1}\hat{U}_{i,j}^3 - 8^{-1}\hat{U}_{i,j}^2) \right ] + n 30^{-d}.
\end{align}
\end{proposition}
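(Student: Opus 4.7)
My plan is to apply Proposition \ref{general_closed_form} directly, which reduces the problem to evaluating the three integral functionals $\mu_1(\textbf{a})$, $\mu_2(\textbf{a})$, and $\mu_3$ associated with the weight $w_t(\textbf{u}) = \prod_{j=1}^d (u_j - 1/2)^2$. Since this weight factors across coordinates, each of the three multivariate integrals reduces to a product of one-dimensional integrals, which can then be computed by elementary antiderivatives of polynomials of degree at most four.

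First I would compute $\mu_1(\textbf{a}) = \prod_{j=1}^d \int_{a_j}^1 (u - 1/2)^2 \, du$. The one-dimensional integral is $\frac{1}{3}(u - 1/2)^3 \big|_{a_j}^1 = 1/24 - (a_j - 1/2)^3/3$, which gives the factor $24^{-1} - 3^{-1}(a_j - 2^{-1})^3$ appearing in the statement. Next, for $\mu_2(\textbf{a}) = \prod_{j=1}^d \int_{a_j}^1 u (u - 1/2)^2 \, du$, I would expand $u(u-1/2)^2 = u^3 - u^2 + u/4$ and integrate termwise to obtain $(1/4 - 1/3 + 1/8) - (a_j^4/4 - a_j^3/3 + a_j^2/8) = 24^{-1} - 4^{-1} a_j^4 + 3^{-1} a_j^3 - 8^{-1} a_j^2$, matching the factor in the statement. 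Finally, $\mu_3 = \left(\int_0^1 u^2(u-1/2)^2 \, du\right)^d$, and expanding $u^2(u-1/2)^2 = u^4 - u^3 + u^2/4$ yields $1/5 - 1/4 + 1/12 = 1/30$, so $\mu_3 = 30^{-d}$.

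Substituting these three expressions into the formula provided by Proposition \ref{general_closed_form}, with $a_j$ replaced by $\hat{U}_{i,j} \vee \hat{U}_{l,j}$ in $\mu_1$ and by $\hat{U}_{i,j}$ in $\mu_2$, gives precisely the displayed expression for $T_n$. There is no genuine obstacle here; the proof is a routine evaluation, and the only thing to watch is correct bookkeeping of the constants $1/24$, $1/8$, $1/4$ and $1/30$ across the three factorized integrals. Bounded and integrable nature of $w_t$ on $[0,1]^d$ is immediate, so the hypotheses of Proposition \ref{general_closed_form} are satisfied and no additional justification is required.
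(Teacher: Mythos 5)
Your proposal is correct and follows exactly the paper's own route: it evaluates $\mu_1(\textbf{a})$, $\mu_2(\textbf{a})$ and $\mu_3$ for $w_t(\textbf{u}) = \prod_{j=1}^d (u_j - 1/2)^2$ by elementary coordinatewise integration and then substitutes into Proposition \ref{general_closed_form}, and all your constants ($1/24$, $1/30$, etc.) check out. No gaps; nothing further is needed.
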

\begin{proof}
When $w_t(\textbf{u}) = \prod_{j=1}^d (u_j - 0.5)^2$, $\textbf{u}\in[0,1]^d$, by straightforward integration, for any $a\in[0,1]^d$, we get
\begin{align}
\mu_1(\textbf{a}) &= \prod_{j=1}^d (24^{-1} -3^{-1}(a_j - 2^{-1})^3 ),\\
\mu_2(\textbf{a}) &=\prod_{j=1}^d (24^{-1} - 4^{-1}a_j^4 + 3^{-1}a_j^3 - 8^{-1}a_j^2),
\end{align} and $\mu_3 = 30^{-d}$. The rest follows from Proposition \ref{general_closed_form}. 
\end{proof} A more-detailed examination will reveal a degree of similarity between $w_t(\textbf{u})$ and the Anderson-Darling weights, meaning that $T_n$ represents a step towards the unattainable Anderson-Darling independence statistic.
\subsection{Upper-tail weights}
The concept of tail dependence can be narrowed to describe dependence at a particular distribution quadrant. Upper-tail dependence occurs when data cluster in the upper-right corner of the joint distribution, and is measured by upper-tail dependence coefficient $\lambda_U$. Formally, for $d=2$, the upper-tail dependence coefficient is defined as
\begin{eqnarray}
\lambda_U = \lim_{t\rightarrow 1^{-1}}P[X_1 > F_1^{-1}(t) | X_2 > F_2^{-1}(t)].
\end{eqnarray} Lower-tail dependence is defined similarly, with the corresponding  coefficient $\lambda_L$ given by
\begin{equation}
\lambda_L = \lim_{t\rightarrow0^{+}} P[X_1 \leq F_1^{-1}(t) | X_2 \leq F_2^{-1}(t)].
\end{equation} These parameters depend only on the copula of $X_1,..,X_d$ and can be expressed in terms of $C$; for additional details see Theorem 5.4.2 in \cite{Nelsen2006}. The concept of upper- and lower-tail dependence is relevant to many applications, and the presence of asymmetric tail-dependence in various data is well-documented; for financial examples see \cite{Michelis2010} and \cite{Patton2006}. Among the better-known copulas which feature upper, but not lower-tail dependence are Gumbel, Joe, Caudras-Aug\'{e} and Marshall-Olkin families. When upper-tail dependence is of interest, setting $w_p(\textbf{u}) = \prod_{j=1}^d u_j^2$, $\textbf{u}\in [0,1]^d$ will place greater emphasis on observations in the first quadrant of the distribution and lead to an upper tail-weighted statistic $P_n$ given by
\begin{equation}
P_n = n \int_{[0,1]^d} \left ( C_n(\textbf{u}) - \prod_{j=1}^d u_j \right )^2 \prod_{j=1}^d u_j^2 du,\hspace{4mm}\textbf{u}\in[0,1]^d.
\end{equation} Note that the statistic $P_n$ is a special case of (\ref{equation::deheuvels_statistic}) which arises when the exponent coefficients are set to $\beta_j = 1$, for $j=1..d$. Computational formula for $P_n$ follows directly from Proposition \ref{deheuvels_closed_form}, and asymptotic properties from Theorem \ref{theorem::convergence_in_distribution}.
\begin{proposition}
We have that
\begin{align}
P_n &=  \sum_{i=1}^n \left [ \frac{1}{n} \sum_{l=1}^n \prod_{j=1}^d \frac{(1-\hat{U}_{i,j}\vee \hat{U}_{l,j})^{3}}{3} - 2\prod_{j=1}^d \frac{(1-\hat{U}_{i,j})^{4}}{4} \right ] + n 5^{-d}.
\end{align}
\end{proposition}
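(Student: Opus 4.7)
The plan is to recognize that the weight $w_p(\textbf{u}) = \prod_{j=1}^d u_j^2$ is precisely the Deheuvels weight $\prod_{j=1}^d u_j^{2\beta_j}$ evaluated at $\beta_j = 1$ for every $j = 1, \ldots, d$. Consequently, $P_n$ coincides with $D_n$ at $\beta = (1,\ldots,1)$, and the entire argument reduces to specializing the closed-form expression in Proposition \ref{deheuvels_closed_form} at that value of $\beta$.

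Concretely, I would substitute $\beta_j = 1$ into each of the three exponents appearing in Proposition \ref{deheuvels_closed_form}: $2\beta_j + 1 = 3$ inside the double sum, $2\beta_j + 2 = 4$ in the marginal-rank term, and $2\beta_j + 3 = 5$ in the constant term. The first two substitutions immediately produce the factors $(1 - \hat{U}_{i,j} \vee \hat{U}_{l,j})^3/3$ and $(1 - \hat{U}_{i,j})^4/4$ displayed in the statement, while the last substitution yields $\prod_{j=1}^d 5^{-1} = 5^{-d}$, matching the constant $n\, 5^{-d}$. Reading off Proposition \ref{deheuvels_closed_form} then gives the claimed identity directly.

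There is essentially no obstacle here: the result is a substitution corollary of a formula already established. If a self-contained derivation were preferred, the alternative route would be to apply Proposition \ref{general_closed_form} directly to $w_p(\textbf{u}) = \prod_{j=1}^d u_j^2$, compute the three auxiliary maps $\mu_1(\textbf{a}) = \prod_{j=1}^d (1-a_j)^3/3$, $\mu_2(\textbf{a}) = \prod_{j=1}^d (1-a_j)^4/4$, and $\mu_3 = 5^{-d}$ via elementary integration of monomials in $u_j$, and assemble the three terms. Either route produces the same closed form, and the only verification step worth double-checking is that the exponents in the Deheuvels formula and the counting of denominator powers correctly line up with $\beta = (1,\ldots,1)$.
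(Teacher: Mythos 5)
Your proposal is correct and matches the paper's own treatment: the paper likewise notes that $P_n$ is the Deheuvels statistic $D_n$ with $\beta_j=1$ for all $j$ and reads the formula off Proposition \ref{deheuvels_closed_form}, with the exponents $3$, $4$ and the constant $5^{-d}$ arising exactly as you describe. The alternative direct route via Proposition \ref{general_closed_form} that you sketch is also valid but unnecessary.
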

\subsection{Lower-tail weights}
Copulas belonging to Clayton and Raftery families are among those which have lower, but not upper-tail dependence. When the focus is on dependence among small outliers, setting the weights to $w_l = \prod_{j=1}^d (1-u_j)^2$, $\textbf{u}\in[0,1]^d$, will place greater emphasis on the lower tail of the distribution, leading to a lower tail-weighted statistic $L_n$ given by
\begin{equation}
L_n = n \int_{[0,1]^d} \left ( C_n(\textbf{u}) - \prod_{j=1}^d u_j \right )^2 \prod_{j=1}^d (1-u_j)^2 du,\hspace{4mm}\textbf{u}\in[0,1]^d.
\end{equation} As before, the expression of $L_n$ in terms of percentile ranks can be obtained using Proposition \ref{general_closed_form}, and asymptotic properties follow from Theorem \ref{theorem::convergence_in_distribution}.
\begin{proposition}
We have that
\begin{align}
L_n &=\sum_{i=1}^n \left [ \frac{1}{n} \sum_{l=1}^n \prod_{j=1}^d 3^{-1}(1-\hat{U}_{i,j}\vee\hat{U}_{l,j})^3\right.\\
&\left. - 2 \prod_{j=1}^d \left (\frac{1}{12} -\frac{1}{4}\hat{U}_{i,j}^3+\frac{2}{3}\hat{U}_{i,j}^4-\frac{1}{2}\hat{U}_{i,j}^2 \right ) \right ] + n 30^{-d}.
\end{align} 
\end{proposition}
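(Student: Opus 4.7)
The plan is to read the result off Proposition \ref{general_closed_form} applied to the specific weight $w_l(\textbf{u}) = \prod_{j=1}^d (1-u_j)^2$. Because this weight factorizes across coordinates, all three auxiliary integrals $\mu_1(\textbf{a})$, $\mu_2(\textbf{a})$, and $\mu_3$ reduce to products of one-dimensional integrals of polynomials in a single variable, so no nontrivial machinery is needed once the general closed form is in hand.

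First I would compute $\mu_1(\textbf{a})$ by factoring the integral as $\prod_{j=1}^d \int_{a_j}^1 (1-u_j)^2\, du_j$ and evaluating each factor as $\tfrac{1}{3}(1-a_j)^3$, which matches the first term on the right-hand side. Next I would compute $\mu_2(\textbf{a}) = \prod_{j=1}^d \int_{a_j}^1 u_j (1-u_j)^2\, du_j$ by expanding the integrand as $u_j - 2u_j^2 + u_j^3$ and integrating term by term, obtaining each factor in the stated polynomial form. Finally, $\mu_3 = \prod_{j=1}^d \int_0^1 u_j^2 (1-u_j)^2\, du_j$ is a product of beta integrals $B(3,3) = 1/30$, giving $\mu_3 = 30^{-d}$.

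Once these three quantities are in hand, I would substitute them into the formula from Proposition \ref{general_closed_form} and relabel the dummy coordinates as $\hat{U}_{i,j} \vee \hat{U}_{l,j}$ in $\mu_1$ and $\hat{U}_{i,j}$ in $\mu_2$ to obtain the claimed identity. Asymptotic validity of the corresponding statistic $L_n$ is separately justified by noting that $w_l$ is bounded and integrable on $[0,1]^d$, so Theorem \ref{theorem::convergence_in_distribution} applies directly.

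There is essentially no obstacle here beyond careful polynomial bookkeeping; the only point requiring a moment of attention is checking the expansion of $(1-u_j)^2$ against the $u_j$ and $u_j^2$ factors that appear inside $\mu_2$ and $\mu_3$, since a sign or coefficient error would propagate through the product. The symmetry $u \leftrightarrow 1-u$ between the $L_n$ and $P_n$ weights provides a convenient sanity check: the $\mu_3$ value $30^{-d}$ and the overall structure of the expression should mirror the upper-tail case up to the orientation of the kernel.
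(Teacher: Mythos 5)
Your proposal is correct and follows essentially the same route as the paper's own proof: factor the weight coordinate-wise, compute $\mu_1(\textbf{a})=\prod_{j=1}^d \tfrac13(1-a_j)^3$, $\mu_2(\textbf{a})=\prod_{j=1}^d\bigl(\tfrac1{12}-\tfrac12 a_j^2+\tfrac23 a_j^3-\tfrac14 a_j^4\bigr)$ and $\mu_3=30^{-d}$ by one-dimensional integration, then substitute into Proposition \ref{general_closed_form}. One caveat: your term-by-term integration yields $-\tfrac14 a_j^4+\tfrac23 a_j^3$ (exactly as in the paper's proof), not literally ``the stated polynomial form,'' since the proposition as printed swaps those exponents ($-\tfrac14\hat{U}_{i,j}^3+\tfrac23\hat{U}_{i,j}^4$), an apparent typo in the statement that you should flag rather than claim to reproduce.
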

\begin{proof}
For any $a\in[0,1]^d$, 
\begin{align}
\mu_1(\textbf{a}) &= \prod_{j=1}^d 3^{-1} (1-a_j)^3,\\
\mu_2(\textbf{a}) &= \prod_{j=1}^d \frac{1}{12}-\frac{1}{4}a_j^4+\frac{2}{3}a_j^3-\frac{1}{2}a_j^2,
\end{align} and $\mu_3 = 30^{-d}$. The rest follows from Proposition \ref{general_closed_form}.
\end{proof} It is worth noting that many copulas can significantly deviate from independence in multiple parts of the distribution at the same time, making the problem of weight selection more complex. The weight choice problem is also unavoidable since even the use of "unweighted" statistics as in \cite{Kojadinovic2009b}, \cite{Kojadinovic2009d} or \cite{Genest2004} represents a choice of $w(\textbf{u})=1$, for any $\textbf{u} \in [0,1]^d$, which is not conceptually different from any other $w(\textbf{u})$. A versatile approach to weight selection could involve a two-step procedure by which the function $w(\textbf{u})$ is estimated from the data. Such data-driven weights represent significant scope for future work.

\section{Simulation study}
\label{section::test_power_simulation}

\subsection{Asymptotic critical values}
To carry out the tests in practice, appropriate critical values need to be obtained. Since the expansion of the distribution of $W$ in terms of known distribution functions appears unavailable (see \cite{Deheuvels2005}), approximate asymptotic critical values for a given $w(\textbf{u})$ and $d$ can be tabulated by simulation. We obtain critical points from the limiting distributions of $U_n$, $M_n$, $T_n$, $P_n$ and $L_n$ based on $100,000$ sample draws for the case when $d=2$, and group them in Table \ref{critical_values_table} below.
\begin{table}[htbp]
\begin{center}
\begin{tabular}{|l|l|l|l|l|l|}
\hline
 &  \multicolumn{ 4}{c|}{\textbf{Prob(statistic$\geq$critical value)}} & \\ \hline
\textbf{Weight} &  \multicolumn{1}{c|}{\textbf{0.15}} & \multicolumn{1}{c|}{\textbf{0.10}} & \multicolumn{1}{c|}{\textbf{0.05}} & \multicolumn{1}{c|}{\textbf{0.01}} & \multicolumn{1}{c|}{\textbf{Scale}} \\ \hline
Uniform ($U_n$)  & $4.4256$  & $5.1237$ & $6.3702$  & $9.3922$ & $\times10^{-2} $  \\ \hline
Median ($M_n$)   & $1.8012$ & $2.1243$ & $2.7126$ & $4.1381$ & $\times10^{-3}$ \\ \hline
Tails ($T_n$)  & $1.2065$ & $1.3402$ & $1.5649$ & $2.0912$ & $\times10^{-4}$ \\ \hline
Upper tail ($P_n$)  & $4.6109$ & $5.3775$ & $6.6872$ & $9.8749$ & $\times10^{-3}$ \\ \hline
Lower tail ($L_n$)  & $3.8026$ & $4.4638$  & $5.6291$  & $8.4140$ & $\times10^{-3}$  \\ \hline
\end{tabular}
\end{center}
\caption{Approximate critical points from asymptotic distributions of $U_n$, $M_n$, $T_n$, $P_n$ and $L_n$.}
\label{critical_values_table}
\end{table}
\subsection{Permutation critical values}
\label{permutation_critical_values}
One disadvantage of asymptotic critical values is the need re-tabulate the points whenever new weights are to be used. Alternatively, critical values from finite-sample distribution can be obtained by following a permutation procedure, conditional on the sample at hand. As before, let $(X_{1,1},...,X_{1,d})$, ...,$(X_{n,1},...,X_{n,d})$ represent $n$ independent copies of $\textbf{X}$. Let $R_j = (R_{1,j},..,R_{n,j})$, $j=1..d$, be uniformly-distributed on the set of permutations $\{1,..,n\}$ such that for any $j \neq k$, $R_j$ is independent from $R_k$ and from $(X_{1,1},...,X_{1,d})$, ...,$(X_{n,1},...,X_{n,d})$. A version of (\ref{empirical_copula_process}) based on $(X_{R_{1,1},1},...,X_{R_{1,d},d})$, ..., $(X_{R_{n,1},1},...,X_{R_{n,d},d})$ is the \textit{permutation independence process}, and we denote weighted test-statistic based on such process by $\tilde{W}_n$. Given level of significance $\alpha$, permutation critical value is defined as $\tilde{c}_n = \inf\{t > 0:P(\tilde{W}_n > t) < \alpha\}$. As noted on p. 371 of \cite{wellner}, the proof of consistency of a test based on $\tilde{c}_n$ appears to be absent from the literature, but the asymptotic behaviour of permutation independence process is likely to be similar to that of the bootstrap independence process; for details see p. 369 of \cite{wellner}. Here, in all simulations we numerically verify consistency and correct nominal size of the tests based on $\tilde{c}_n$. In practice, such permutation-based tests can be carried out using the same procedure as in Section 3.5 of \cite{Kojadinovic2009b}:
\begin{enumerate}
\item Let $W_{n,0}$ be the statistic computed form the original sample.
\item Generate random permutations $R_j$, $j=1..d$, and calculate the value of the statistic $\tilde{W}_{n,1}$ based on the permuted sample.
\item Repeat the previous step $N-1$ times, leading to a collection of permuted statistics $\tilde{W}_{n,2},..,\tilde{W}_{n,N}$.
\item Approximate $p$-value for the test can be obtained as
\begin{equation}
\frac{1}{N+1}\left ( \frac{1}{2} + \sum_{i=1}^N \mathbb{I}(\tilde{W}_{n,i} \geq W_{n,0}) \right )
\end{equation} 
\end{enumerate} The permutation approach described above is also equivalent to the one used in Section 4.4 of \cite{Genest2004}.

\subsection{Simulations}
To assess test performance in finite samples and to document the effect of weights on test power, an extensive simulation study has been  conducted. Due to the availability of computational formulas for all statistics considered here, a single permutation-based test can be carried out in milliseconds using a consumer-grade processor with a high degree of accuracy. The precise estimation of the Type II error rate, on the other hand, requires large repeated sampling. To ensure that a broad range of alternatives can be covered, the complexity in this section is kept to a minimum, and the focus is on the case when $d=2$. In what follows, we use simulations to obtain test power in bivariate samples under a variety of dependence alternatives. In particular, we consider  bivariate distributions with standard normal marginals and Gaussian, $t$, Gumbel, Clayton and Frank copulas: 
\begin{itemize}
\item Bi-variate Gaussian copula is parametrized by the correlation coefficient $\rho \in [0,1]$. Independence occurs when $\rho=0$, and the copula is defined as
\begin{align}
\Phi_{\rho}&(\textbf{u}) =\\
& \int_{-\infty}^{\Phi^{-1}(u_1)}\int_{-\infty}^{\Phi^{-1}(u_2)} \frac{1}{2\pi \sqrt{1-\rho^2}} \exp \left(\frac{2\rho s t - s^2 - t^2}{2(1-\rho^2)} \right) ds dt,
\end{align} for $\textbf{u} \in [0,1]^2$, where $\Phi$ is the univariate standard normal distribution function.
\item Similarly, bi-variate $t$-copula with a linear correlation parameter $\rho \in [0,1]$ and degrees of freedom $k \in (0,\infty)$ is defined as
\begin{align}
T_{\rho,k}&(\textbf{u}) = \\
& \int_{-\infty}^{t_k^{-1}(u_1)}\int_{-\infty}^{t_k^{-1}(u_2)} \frac{1}{2\pi \sqrt{1-\rho^2}} \left ( 1 + \frac{s^2+t^2-2\rho s t }{k(1-\rho^2)} \right )^{\frac{-k+2}{2}} dsdt,
\end{align} for $\textbf{u} \in [0,1]^2$, where $t_k(x) = \int_{-\infty}^x \frac{\Gamma((k+1)/2)}{\sqrt{\pi k}(k/2)}(1+\frac{s^2}{k})^\frac{-k+1}{2}ds$, and $\Gamma$ is the Euler function. For $\rho = 0$, copula $T_{\rho,k}$ approaches independence as $k \rightarrow \infty$, while smaller values of $k$ result in increasing symmetric tail dependence.
\item For $d=2$, Gumbel copula is defined as
\begin{align}
G_{\alpha}(\textbf{u}) = \exp\{-[(-\ln(u_1)^\alpha) + (-\ln(u_2))^\alpha]^{1/\alpha} \},
\end{align} for $\textbf{u} \in [0,1]^2$, where dependence parameter $\alpha \in [1,\infty)$ is such that $\alpha = 1$ implies independence, and increasing values imply greater degree of concordance and upper-tail dependence; in fact, for bi-variate Gumbel copula we have that $\lambda_U = 2-2^{1/\alpha}$.
\item The bi-variate Clayton copula is given by
\begin{align}
C_{\theta}(\textbf{u}) = \max(u_1^{-\theta}+u_2^{-\theta}-1,0)^{-1/\theta}, 
\end{align}$\textbf{u}\in[0,1]^2$, where $\theta \in [-1,\infty) \setminus \{0\}$ is the dependence parameter. Independence occurs when $\theta = 0$, while increasing values of $\theta$ lead to greater concordance and lower-tail dependence; here, we have that for $\theta\geq0$, $\lambda_L = 2^{-1/\theta}$.
\item Lastly, bi-variate Frank copula is defined as
\begin{align}
F_{\gamma}(\textbf{u}) = -\frac{1}{\gamma} \ln \left ( 1 + \frac{(e^{-\gamma u_1}-1)(e^{-\gamma u_2}-1)}{e^{-\gamma}-1} \right ),
\end{align}$\textbf{u}\in[0,1]^2$, where dependence parameter $\gamma \in (-\infty,\infty) \setminus \{0\}$. Independence occurs when $\gamma = 0$, and greater values of $\gamma$ indicate greater concordance. Similar to the Gaussian copula, Frank copula has no tail-dependence, but despite the normal margins leads to a jointly non-normal distribution.
\end{itemize}

For Gumbel, Clayton, Frank and Gaussian copulas, we consider the following respective parameter values: $\alpha = 1, 1.1, 1.2, 1.3, 1.4, 1.5,1.6$, $\theta = 0,0.2,0.4,0.6,0.8,1$, $\gamma = 0,0.5,1,1.5,2,2.5,3$ and $\rho = 0, 0.2, 0.4, 0.6, 0.8$, where the smallest value in each grid corresponds to independence. For distributions constructed using the $t$-copula, we first fix the correlation at zero and vary the tail-dependence parameter on $k = 0.1,0.2,0.3,...,1.9$. For any $k$, this leads to a jointly non-normal distribution with standard normal marginals, zero correlation and varying degrees of symmetric tail dependence. We then fix the degrees of freedom at $k=1$ and vary correlation on $\rho = 0,0.1,0.2,..0.7$. In both cases linear and tail dependence coexist, and no values on the parameter grid correspond to independence. 

For each of the five copulas, and each of the points on the parameter grids, we draw $S = 1,000$ random samples of size $n=50$, and for every draw, obtain the critical values using $N = 500$ permutations following algorithm outlined in Section \ref{permutation_critical_values}. The proportion of rejections of the null of independence by each of the statistics is the corresponding test power and is shown in Figure \ref{permutation_power_curves}.

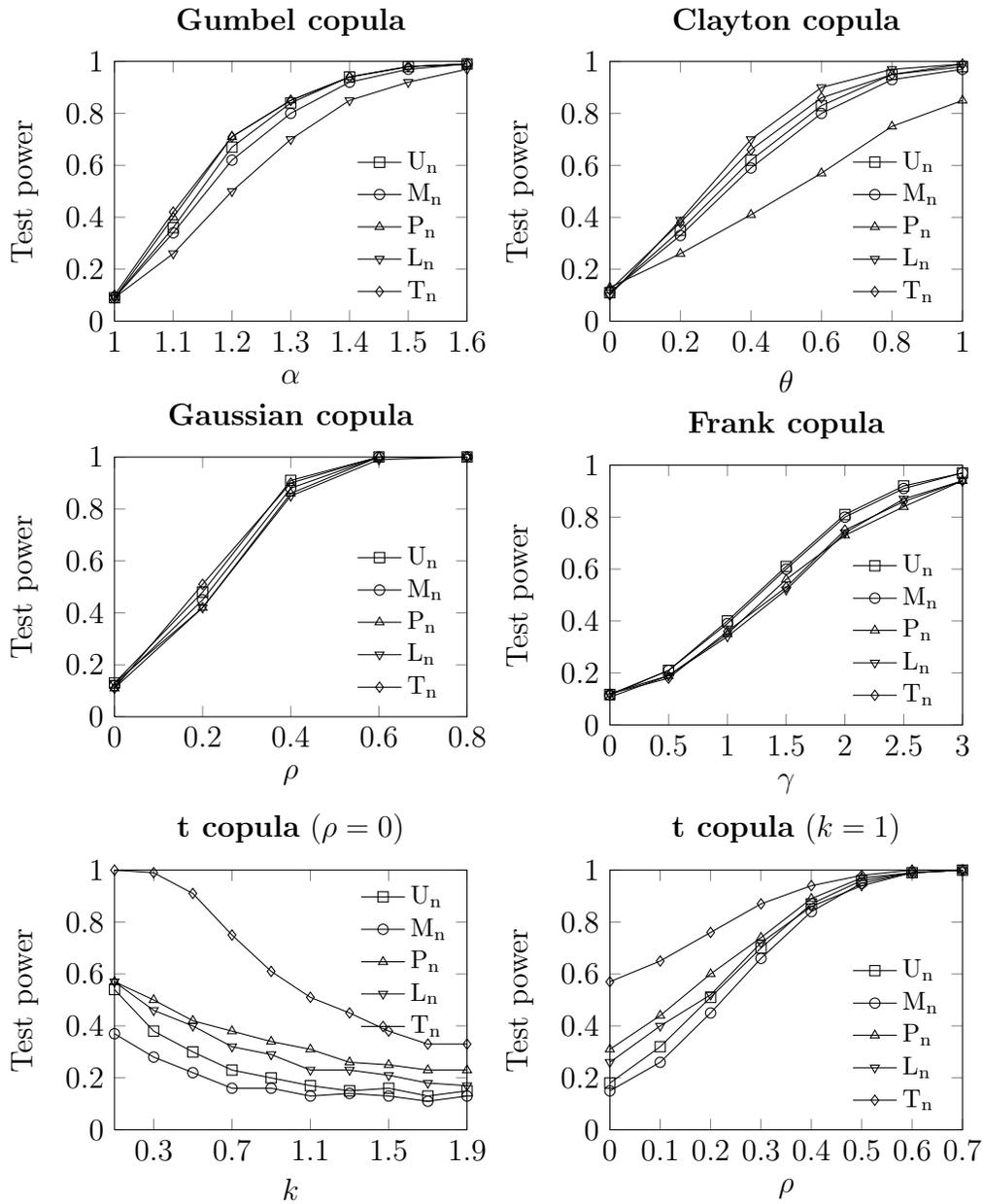
\begin{figure}
%
%
%
%
\begin{tikzpicture}

\begin{axis}[%
width=1.9in,
height=1.4in,
scale only axis,
xmin=1,
xmax=5,
xtick={1,2,3,4,5},
xticklabels={0,0.2,0.4,0.6,0.8},
xlabel={$\rho$},
ymin=0,
ymax=1,
ylabel={Test power},
name=plot3,
title={\textbf{Gaussian copula}},
legend style={at={(0.97,0.03)},anchor=south east,fill=none,draw=none,legend cell align=left}
]
\addplot [
color=black,
solid,
mark=square,
mark options={solid}
]
table[row sep=crcr]{
1 0.13\\
2 0.48\\
3 0.91\\
4 1\\
5 1\\
};
\addlegendentry{\footnotesize $\text{U}_\text{n}$};

\addplot [
color=black,
solid,
mark=o,
mark options={solid}
]
table[row sep=crcr]{
1 0.12\\
2 0.45\\
3 0.88\\
4 1\\
5 1\\
};
\addlegendentry{\footnotesize $\text{M}_\text{n}$};

\addplot [
color=black,
solid,
mark=triangle,
mark options={solid}
]
table[row sep=crcr]{
1 0.11\\
2 0.42\\
3 0.86\\
4 1\\
5 1\\
};
\addlegendentry{\footnotesize $\text{P}_\text{n}$};

\addplot [
color=black,
solid,
mark=triangle,
mark options={solid,,rotate=180}
]
table[row sep=crcr]{
1 0.13\\
2 0.42\\
3 0.85\\
4 0.99\\
5 1\\
};
\addlegendentry{\footnotesize $\text{L}_\text{n}$};

\addplot [
color=black,
solid,
mark=diamond,
mark options={solid}
]
table[row sep=crcr]{
1 0.11\\
2 0.51\\
3 0.9\\
4 1\\
5 1\\
};
\addlegendentry{\footnotesize $\text{T}_\text{n}$};

\end{axis}

\begin{axis}[%
width=1.9in,
height=1.4in,
scale only axis,
xmin=1,
xmax=7,
xtick={1,2,3,4,5,6,7},
xticklabels={1,1.1,1.2,1.3,1.4,1.5,1.6},
xlabel={$\alpha$},
ymin=0,
ymax=1,
ylabel={Test power},
name=plot1,
at=(plot3.above north west),
anchor=below south west,
title={\textbf{Gumbel copula}},
legend style={at={(0.97,0.03)},anchor=south east,fill=none,draw=none,legend cell align=left}
]
\addplot [
color=black,
solid,
mark=square,
mark options={solid}
]
table[row sep=crcr]{
1 0.09\\
2 0.36\\
3 0.67\\
4 0.84\\
5 0.94\\
6 0.98\\
7 0.99\\
};
\addlegendentry{\footnotesize $\text{U}_\text{n}$};

\addplot [
color=black,
solid,
mark=o,
mark options={solid}
]
table[row sep=crcr]{
1 0.09\\
2 0.34\\
3 0.62\\
4 0.8\\
5 0.92\\
6 0.97\\
7 0.99\\
};
\addlegendentry{\footnotesize $\text{M}_\text{n}$};

\addplot [
color=black,
solid,
mark=triangle,
mark options={solid}
]
table[row sep=crcr]{
1 0.09\\
2 0.4\\
3 0.71\\
4 0.85\\
5 0.94\\
6 0.98\\
7 0.99\\
};
\addlegendentry{\footnotesize $\text{P}_\text{n}$};

\addplot [
color=black,
solid,
mark=triangle,
mark options={solid,,rotate=180}
]
table[row sep=crcr]{
1 0.09\\
2 0.26\\
3 0.5\\
4 0.7\\
5 0.85\\
6 0.92\\
7 0.97\\
};
\addlegendentry{\footnotesize $\text{L}_\text{n}$};

\addplot [
color=black,
solid,
mark=diamond,
mark options={solid}
]
table[row sep=crcr]{
1 0.1\\
2 0.42\\
3 0.71\\
4 0.85\\
5 0.94\\
6 0.98\\
7 0.99\\
};
\addlegendentry{\footnotesize $\text{T}_\text{n}$};

\end{axis}

\begin{axis}[%
width=1.9in,
height=1.4in,
scale only axis,
xmin=1,
xmax=6,
xtick={1,2,3,4,5,6},
xticklabels={0,0.2,0.4,0.6,0.8,1},
xlabel={$\theta$},
ymin=0,
ymax=1,
ylabel={Test power},
name=plot2,
at=(plot1.right of south east),
anchor=left of south west,
title={\textbf{Clayton copula}},
legend style={at={(0.97,0.03)},anchor=south east,fill=none,draw=none,legend cell align=left}
]
\addplot [
color=black,
solid,
mark=square,
mark options={solid}
]
table[row sep=crcr]{
1 0.11\\
2 0.35\\
3 0.62\\
4 0.83\\
5 0.95\\
6 0.98\\
};
\addlegendentry{\footnotesize $\text{U}_\text{n}$};

\addplot [
color=black,
solid,
mark=o,
mark options={solid}
]
table[row sep=crcr]{
1 0.11\\
2 0.33\\
3 0.59\\
4 0.8\\
5 0.93\\
6 0.97\\
};
\addlegendentry{\footnotesize $\text{M}_\text{n}$};

\addplot [
color=black,
solid,
mark=triangle,
mark options={solid}
]
table[row sep=crcr]{
1 0.13\\
2 0.26\\
3 0.41\\
4 0.57\\
5 0.75\\
6 0.85\\
};
\addlegendentry{\footnotesize $\text{P}_\text{n}$};

\addplot [
color=black,
solid,
mark=triangle,
mark options={solid,,rotate=180}
]
table[row sep=crcr]{
1 0.1\\
2 0.39\\
3 0.7\\
4 0.9\\
5 0.97\\
6 0.99\\
};
\addlegendentry{\footnotesize $\text{L}_\text{n}$};

\addplot [
color=black,
solid,
mark=diamond,
mark options={solid}
]
table[row sep=crcr]{
1 0.12\\
2 0.38\\
3 0.66\\
4 0.86\\
5 0.95\\
6 0.99\\
};
\addlegendentry{\footnotesize $\text{T}_\text{n}$};

\end{axis}

\begin{axis}[%
width=1.9in,
height=1.4in,
scale only axis,
xmin=1,
xmax=7,
xtick={1,2,3,4,5,6,7},
xticklabels={0,0.5,1,1.5,2,2.5,3},
xlabel={$\gamma$},
ymin=0,
ymax=1,
ylabel={Test power},
name=plot4,
at=(plot2.below south west),
anchor=above north west,
title={\textbf{Frank copula}},
legend style={at={(0.97,0.03)},anchor=south east,fill=none,draw=none,legend cell align=left}
]
\addplot [
color=black,
solid,
mark=square,
mark options={solid}
]
table[row sep=crcr]{
1 0.117\\
2 0.21\\
3 0.4\\
4 0.61\\
5 0.81\\
6 0.92\\
7 0.97\\
};
\addlegendentry{\footnotesize $\text{U}_\text{n}$};

\addplot [
color=black,
solid,
mark=o,
mark options={solid}
]
table[row sep=crcr]{
1 0.115\\
2 0.21\\
3 0.39\\
4 0.6\\
5 0.8\\
6 0.91\\
7 0.97\\
};
\addlegendentry{\footnotesize $\text{M}_\text{n}$};

\addplot [
color=black,
solid,
mark=triangle,
mark options={solid}
]
table[row sep=crcr]{
1 0.107\\
2 0.19\\
3 0.35\\
4 0.56\\
5 0.73\\
6 0.84\\
7 0.94\\
};
\addlegendentry{\footnotesize $\text{P}_\text{n}$};

\addplot [
color=black,
solid,
mark=triangle,
mark options={solid,,rotate=180}
]
table[row sep=crcr]{
1 0.121\\
2 0.19\\
3 0.34\\
4 0.52\\
5 0.74\\
6 0.87\\
7 0.94\\
};
\addlegendentry{\footnotesize $\text{L}_\text{n}$};

\addplot [
color=black,
solid,
mark=diamond,
mark options={solid}
]
table[row sep=crcr]{
1 0.12\\
2 0.18\\
3 0.36\\
4 0.53\\
5 0.75\\
6 0.86\\
7 0.94\\
};
\addlegendentry{\footnotesize $\text{T}_\text{n}$};

\end{axis}

\begin{axis}[%
width=1.9in,
height=1.4in,
scale only axis,
xmin=1,
xmax=8,
xtick={1,2,3,4,5,6,7,8},
xticklabels={0,0.1,0.2,0.3,0.4,0.5,0.6,0.7},
xlabel={$\rho$},
ymin=0,
ymax=1,
ylabel={Test power},
name=plot6,
at=(plot4.below south west),
anchor=above north west,
title={\textbf{t copula} ($k = 1$)},
legend style={at={(0.97,0.03)},anchor=south east,fill=none,draw=none,legend cell align=left}
]
\addplot [
color=black,
solid,
mark=square,
mark options={solid}
]
table[row sep=crcr]{
1 0.18\\
2 0.32\\
3 0.51\\
4 0.7\\
5 0.87\\
6 0.96\\
7 0.99\\
8 1\\
};
\addlegendentry{\footnotesize $\text{U}_\text{n}$};

\addplot [
color=black,
solid,
mark=o,
mark options={solid}
]
table[row sep=crcr]{
1 0.15\\
2 0.26\\
3 0.45\\
4 0.66\\
5 0.84\\
6 0.95\\
7 0.99\\
8 1\\
};
\addlegendentry{\footnotesize $\text{M}_\text{n}$};

\addplot [
color=black,
solid,
mark=triangle,
mark options={solid}
]
table[row sep=crcr]{
1 0.31\\
2 0.44\\
3 0.6\\
4 0.74\\
5 0.89\\
6 0.97\\
7 0.99\\
8 1\\
};
\addlegendentry{\footnotesize $\text{P}_\text{n}$};

\addplot [
color=black,
solid,
mark=triangle,
mark options={solid,,rotate=180}
]
table[row sep=crcr]{
1 0.26\\
2 0.4\\
3 0.52\\
4 0.72\\
5 0.86\\
6 0.94\\
7 0.99\\
8 1\\
};
\addlegendentry{\footnotesize $\text{L}_\text{n}$};

\addplot [
color=black,
solid,
mark=diamond,
mark options={solid}
]
table[row sep=crcr]{
1 0.57\\
2 0.65\\
3 0.76\\
4 0.87\\
5 0.94\\
6 0.98\\
7 1\\
8 1\\
};
\addlegendentry{\footnotesize $\text{T}_\text{n}$};

\end{axis}

\begin{axis}[%
width=1.9in,
height=1.4in,
scale only axis,
xmin=1,
xmax=10,
xtick={2,4,6,8,10},
xticklabels={0.3,0.7,1.1,1.5,1.9},
xlabel={$k$},
ymin=0,
ymax=1,
ylabel={Test power},
at=(plot6.left of south west),
anchor=right of south east,
title={\textbf{t copula} ($\rho = 0$)},
legend style={fill=none,draw=none,legend cell align=left}
]
\addplot [
color=black,
solid,
mark=square,
mark options={solid}
]
table[row sep=crcr]{
1 0.54\\
2 0.38\\
3 0.3\\
4 0.23\\
5 0.2\\
6 0.17\\
7 0.15\\
8 0.16\\
9 0.13\\
10 0.15\\
};
\addlegendentry{\footnotesize $\text{U}_\text{n}$};

\addplot [
color=black,
solid,
mark=o,
mark options={solid}
]
table[row sep=crcr]{
1 0.37\\
2 0.28\\
3 0.22\\
4 0.16\\
5 0.16\\
6 0.13\\
7 0.14\\
8 0.13\\
9 0.11\\
10 0.13\\
};
\addlegendentry{\footnotesize $\text{M}_\text{n}$};

\addplot [
color=black,
solid,
mark=triangle,
mark options={solid}
]
table[row sep=crcr]{
1 0.57\\
2 0.5\\
3 0.42\\
4 0.38\\
5 0.34\\
6 0.31\\
7 0.26\\
8 0.25\\
9 0.23\\
10 0.23\\
};
\addlegendentry{\footnotesize $\text{P}_\text{n}$};

\addplot [
color=black,
solid,
mark=triangle,
mark options={solid,,rotate=180}
]
table[row sep=crcr]{
1 0.57\\
2 0.46\\
3 0.4\\
4 0.32\\
5 0.29\\
6 0.23\\
7 0.23\\
8 0.21\\
9 0.18\\
10 0.17\\
};
\addlegendentry{\footnotesize $\text{L}_\text{n}$};

\addplot [
color=black,
solid,
mark=diamond,
mark options={solid}
]
table[row sep=crcr]{
1 1\\
2 0.99\\
3 0.91\\
4 0.75\\
5 0.61\\
6 0.51\\
7 0.45\\
8 0.38\\
9 0.33\\
10 0.33\\
};
\addlegendentry{\footnotesize $\text{T}_\text{n}$};

\end{axis}
\end{tikzpicture}%
\caption{Proportion of rejections of the null of independence by statistics $U_n$, $M_n$, $P_n$, $L_n$ and $T_n$ at $10\%$ level of significance in bi-variate samples of size $n=50$ with standard normal marginals and Gumbel, Clayton, Frank, Gaussian ant $t$ copulas.} 
\label{permutation_power_curves}
\end{figure}

\subsection{The effect of weights on test power}

A number of conclusions are apparent from simulation results in Figure \ref{permutation_power_curves}:
\begin{itemize}
\item Firstly, the choice of the weights clearly has a significant effect on test power; in some cases, the difference in power between the least and most powerful weighted statistics is four-fold. For each of the power curves in Figure \ref{permutation_power_curves}, Table \ref{power_differences} shows the best and the worst-performing statistic (with no ranking when curves cross), and the maximum absolute power difference between the most and least-powerful tests.
\item Secondly, the effect of the weights on test power is greatest in the presence of tail dependence; for distributions constructed using Gaussian and Frank copulas, the effect of weights appears small.
\item Thirdly, the gains in test power appear to be greatest when the weights are set proportional to the deviations of the actual copula from independence.
\item Lastly, the symmetric tail weights appear to be a good choice for a versatile statistic; practically in all scenarios the power of $T_n$ was close to that of the optimal test.
\end{itemize}

\begin{table}[htbp]
\begin{center}
\begin{tabular}{|p{1.7cm}|p{2.3cm}|p{3cm}|p{3cm}|c|}
\hline
\textbf{Copula} & \textbf{Tail dependence} & \textbf{Greatest power} & \textbf{Lowest power} & \multicolumn{1}{l|}{\textbf{Difference}} \\ \hline
Gumbel & Upper & $P_n$ & $L_n$ & 0.21 \\ \hline
Clayton & Lower & $L_n$ & $P_n$ & 0.33 \\ \hline
Gauss & No & $U_n$ & - & 0.09 \\ \hline
Frank & No & $U_n$ & - & 0.08 \\ \hline
t ($\rho = 0$)& Symmetric & $T_n$ & $M_n$ & 0.72 \\ \hline
t (k = 1) & Symmetric & $T_n$ & $M_n$ & 0.41 \\ \hline
\end{tabular}
\end{center}
\caption{Locally most- and least-powerful statistics, and the greatest power difference, for the six dependence alternatives presented in Figure \ref{permutation_power_curves}.}
\label{power_differences}
\end{table}

\section{Discussion}
\label{section::discussion}

The extent to which the weights influence test power is surprising, particularly in the case of tail dependence, which warrants several extensions. In practice, the nature of dependence is rarely known beforehand. When the choice of the weights is not driven by applications, data-driven weights may provide a versatile solution to weight selection problem. Since weights that favor differences between $C$ and $C^\perp$ seem to yield greatest power advantage, a good candidate for $w(\textbf{u})$ could be a parametric estimate $\hat{w}(\textbf{u})$ which minimizes empirical differences $[C_n(\textbf{u}) - C^\perp(\textbf{u})]$, $\textbf{u}\in[0,1]^d$. The issue of data-driven weights, as well as the existence or non-existence of optimality conditions for classes of weighting functions under sequences of local alternatives both need to be explored.

Additionally, the issue of the weights needs to be considered in other contexts where independence tests are applied. For example, \cite{Quessy2010} shows how an independence statistic can be used to test for goodness of fit of Archimedian copulas. With relatively minor refinements, statistics proposed here can yield similar weighted tests for copula goodness of fit, in which case the weights may be used to adjust the sensitivity of the goodness-of-fit procedure to misfits of different parts of the copula (e.g. tails). This may be of considerable interest to practitioners working with copula models, for example, in financial risk modelling. 

\bibliographystyle{elsarticle-harv}
\bibliography{weightedtest}

\end{document}